\newcommand{\Tr}{{{\rm Tr}}}
\def\mod{\mathrm{mod}\ }
\newcommand{\BC}{{\mathbb {C}}}
\newcommand{\BQ}{{\mathbb {Q}}} \newcommand{\BR}{{\mathbb {R}}}
 \newcommand{\BZ}{{\mathbb {Z}}}
\newcommand{\depth}{{\mathrm{depth}}}
 \renewcommand{\Im}{{\mathrm{Im}\shskip }}
\newcommand{\PGL}{{\mathrm{PGL}}} 
\renewcommand{\Re}{{\mathrm{Re}\shskip }}
\newcommand{\PSL}{{\mathrm{PSL}}}
\def\-{^{-1}}
\def\-{^{-1}}
\def\lp {\left (}
\def\rp {\right )}
\def\SSS{\text{\mbox{\larger[1]$\text{\usefont{U}{BOONDOX-cal}{m}{n}S}$}}\hskip 1pt}
\def\SS{\raisebox{- 2 \depth}{$\SSS$}}
\def\bfJ {\boldsymbol J}
\def\bfG{\boldsymbol{G}}
\def\bfF{{\boldsymbol {F}}}
\def\bfA{{\boldsymbol{A}}}
\def\bfB{{\boldsymbol{B}}}
\def\bfC{{\boldsymbol{C}}}
\def\bfD{{\boldsymbol {D}}}
\def\bfE{{\boldsymbol {E}}}
\def\nwedge {\hskip - 2 pt \wedge \hskip - 2 pt }
\def\shskip{\hskip 0.5 pt}
\g@addto@macro\normalsize{\setlength\abovedisplayskip{3pt}}
\g@addto@macro\normalsize{\setlength\belowdisplayskip{3pt}}
\newcommand{\delete}[1]{}
 \newcommand{\SL}{{\mathrm{SL}}}
\newcommand{\sstyle}{\scriptstyle}
\newcommand{\ra}{\rightarrow}
\theoremstyle{plain}
\newtheorem{thm}{Theorem}[section] \newtheorem{cor}[thm]{Corollary}
\newtheorem{lem}[thm]{Lemma}  \newtheorem{prop}[thm]{Proposition}
\newtheorem {rem}[thm]{Remark}
\newtheorem*{acknowledgement}{Acknowledgements}
\numberwithin{equation}{section}
\begin{document}

	\title[On the Fourier Transform of Bessel Functions over Complex Numbers---II]{On the Fourier Transform of Bessel Functions over Complex Numbers---II: the General Case}
	
	\author{Zhi Qi}
	\address{School of Mathematical Sciences\\ Zhejiang University\\Hangzhou, 310027\\China}
	\email{zhi.qi@zju.edu.cn}
	
	\subjclass[2010]{33C10, 42B10}
	\keywords{Bessel functions, exponential integral formulae}

	\begin{abstract}
		In this paper, we prove an exponential integral formula for the  Fourier transform of Bessel functions over complex numbers, along with a radial  exponential integral formula. The former will enable us to develop the complex  spectral theory of the relative trace formula for the Shimura-Waldspurger correspondence and extend the Waldspurger formula from totally real fields to arbitrary number fields. 
	\end{abstract}
	
	\maketitle

\section{Introduction}

\subsection{Representation Theoretic Motivations}
It is known by the work of Baruch and Mao (\cite{BaruchMao-Real,BaruchMao-Global}) that 
the exponential integral formulae due to Weber and Hardy on the Fourier transform of classical Bessel functions over real numbers realize the Shimura-Waldspurger correspondence between representations  of $\PGL_2 (\BR)$ and genuine representations of $\widetilde{\SL}_2 (\BR)$ and constitute the real component of the Waldspurger formula for automorphic forms of $\PGL_2  $ and $\widetilde \SL_2  $ over $\BQ$ or a totally real field. For instance, the formula of Weber is as follows
\begin{equation}\label{0eq: Weber's formula}
\int_0^\infty \frac 1 {\sqrt x}  J_{\nu} \lp 4 \pi \sqrt x\rp e \lp {\pm x y} \rp   {d x}  = \frac 1 {\sqrt {2 y} } e \lp {\mp  \lp \frac 1 {2 y} - \frac 1 8  \nu - \frac 1 8 \rp} \rp J_{\frac 1 2 \nu} \lp \frac {\pi} y \rp,
\end{equation}
for $y > 0$,  where $e(x) = \exp \lp {2\pi i x} \rp$ and $J_\nu (x)$ is the  Bessel function of the first kind of order $\nu$. This formula is valid when  $\Re \nu > - 1$. Taking $\nu = 2 k - 1$  in \eqref{0eq: Weber's formula}, with $k$ a positive integer, 
the Bessel function of order $2 k - 1$, respectively $k - \frac 1 2$, is attached to a discrete series representation of $\PGL_2 (\BR)$, respectively $\widetilde {\SL}_2 (\BR)$. Thus,  in this case, \eqref{0eq: Weber's formula} should be interpreted as the local ingredient at the real place of the correspondence due to  Shimura, Shintani and Waldspurger between cusp forms of weight $2 k$ and cusp forms of weight $ k + \frac 1 2$. 

The purpose of this paper is to prove the complex analogue of Weber and Hardy's formulae for Bessel functions over complex numbers. 
As applications of this paper in the future, one may develop the complex  spectral theory of the relative trace formula for the Shimura-Waldspurger correspondence as the real theory in \cite{BaruchMao-Real}, and furthermore extend the Waldspurger formula from totally real fields as in \cite{BaruchMao-Global} to arbitrary number fields.

\subsection{Statement of Results}


We now introduce the definition of Bessel functions over complex numbers (see \cite[\S 15.3]{Qi-Bessel}, \cite[(6.21), (7.21)]{B-Mo}). Let $\mu  $ be a complex number and $m  $ be an integer. 
We define 
\begin{equation}\label{0def: J mu m (z)}
J_{\mu,\shskip  m} (z) = J_{- 2\mu - \frac 12 m } \lp  z \rp J_{- 2\mu + \frac 12 m  } \lp  {\overline z} \rp.
\end{equation}
The function $J_{\mu,\shskip  m} (z)$ is well defined in the sense that the   expression on the right of \eqref{0def: J mu m (z)} is independent on the choice of the argument of $z$ modulo $2 \pi$. Next, we define
\begin{equation}\label{0eq: defn of Bessel}
\bfJ_{ \mu,\shskip  m} \lp z \rp = 
\left\{ 
\begin{split}
& \frac {2 \pi^2} {\sin (2\pi \mu)} \lp J_{\mu,\shskip  m} (4 \pi \sqrt z) -  J_{-\mu,\shskip  -m} (4 \pi \sqrt z) \rp \hskip 5 pt \text {if } m \text{ is even},\\
& \frac {2 \pi^2 i} {\cos (2\pi \mu)} \lp J_{\mu,\shskip  m} (4 \pi \sqrt z) + J_{-\mu,\shskip   -m} (4 \pi \sqrt z) \rp \hskip 5 pt \text {if }  m \text{ is odd},
\end{split}
\right.
\end{equation} 
where $\sqrt z$ is the principal branch of the square root, and it is understood that in the nongeneric case when  $4 \mu \in 2\BZ + m$ the right hand side should be replaced by its limit.
We stress that  $\bfJ_{ \mu,\shskip   m} \lp z \rp$ is well defined only when $m$ is even; nevertheless $ \bfJ_{ \mu,\shskip   m}   (z^2)   $ is always a well defined function on the complex plane. Moreover, we note that $ \bfJ_{ - \mu,\shskip  - m} \lp z \rp = \bfJ_{ \mu,\shskip   m} \lp z \rp $, so we may assume  with no loss of generality that $m$ is nonnegative.

\begin{rem}
	According to \cite[\S 17, 18]{Qi-Bessel}, on choosing the Weyl element $ \begin{pmatrix}
	 & - 1 \\
	 1 &
	\end{pmatrix}$, when $m$ is even, respectively odd, $|z| \bfJ_{\mu,\shskip   m} (z)$, respectively $ \sqrt{ |z| \overline z} \bfJ_{\mu,\shskip   m} (z)$, is the Bessel function associated with the principal series representation $\pi_{\mu, \shskip  m}$ of $\SL_2 (\BC)$ {\rm(}not necessarily unitary{\rm)} induced from the character $\chiup_{\shskip \mu,\shskip  m}\begin{pmatrix}
	a & \\
	 & a\- 
	\end{pmatrix} = |a|^{4 \mu} (a/|a|)^{  m}$. In the even case, the principal series is indeed a representation of $\PGL_2 (\BC)${\rm(}$= \PSL_2 (\BC)${\rm)}. 
	
	For the kernel formula that defines Bessel functions for $\SL_2 (\BC)$ in representation theory, its proof and applications,  see {\rm \cite{B-Mo-Kernel2,Mo-Kernel2,Baruch-Kernel,Qi-Bessel,Qi-Kuz}}.
\end{rem}


Our main theorem  is as follows.

\begin{thm}\label{thm: main 1} 
	 Suppose that $|\Re \mu| < \frac 1 2$ and $m$ is even. 
	We have the identity
	\begin{equation}\label{eq: main 1}
	\begin{split}
	\int_{0}^{2 \pi} \int_0^\infty \bfJ_{ \mu,\shskip  m } \hskip - 2 pt \lp x e^{i\phi} \rp  e (- 2 x y \cos (\phi + \theta) )    d x  d \phi  
	= \frac {1} {4 y}   e\lp \frac {\cos \theta } y \rp  \bfJ_{  \frac 12 \mu, \frac 1 2 m } \lp  \frac 1 {
		16 y^2  e^{2 i \theta}  } \rp ,
	\end{split}
	\end{equation}
	for $y \in (0, \infty)$ and $\theta \in [0, 2 \pi)$. 
\end{thm}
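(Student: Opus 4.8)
The plan is to pass to the multiplicative (Mellin) picture on $\BC^\times$, where the transform becomes an algebraic identity between products of Gamma factors. First I would rewrite the left-hand side of \eqref{eq: main 1} as a Fourier transform over $\BC$: setting $z = xe^{i\phi}$ and $w = ye^{i\theta}$, the phase is $e(-2xy\cos(\phi+\theta)) = \psi(-zw)$ for the standard additive character $\psi(u) = e(u + \bar u)$, while the measure is $dx\,d\phi = d^2 z/|z|$. Thus \eqref{eq: main 1} asserts that the Fourier transform (against $\psi$, with respect to $d^2z$) of $\bfJ_{\mu,m}(z)/|z|$ equals the explicit right-hand side, viewed as a function of the dual variable $w$.

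Next I would insert the Mellin--Barnes representation of $\bfJ_{\mu,m}$. Because \eqref{0def: J mu m (z)} exhibits $J_{\mu,m}$ as a product of two classical Bessel functions, one in $z$ and one in $\bar z$, its Mellin transform along $\BC^\times$ --- indexed by a continuous parameter $s$ and an angular frequency $p$ of parity $m$ (whence the evenness hypothesis is used) --- is a product of Gamma factors of the shape $\Gamma(s\pm\mu+\cdots)$. Writing $\bfJ_{\mu,m}$ as the corresponding inverse-Mellin contour integral and interchanging with the $z$-integral, the inner integral becomes a Tate local zeta integral over $\BC$ of a single quasicharacter against $\psi(-zw)$; by the local functional equation over $\BC$ it evaluates to the complex Gamma factor times the dual quasicharacter in $w$. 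The outcome is a Barnes-type contour integral in $w$ whose integrand is again a product of Gamma factors.

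The final step is to recognize this Barnes integral as the Mellin--Barnes representation of the right-hand side of \eqref{eq: main 1}. The halving $\mu\mapsto\frac12\mu$ together with the squaring of the argument (from $w$ to $1/(16 y^2 e^{2i\theta})$) is produced by Legendre's duplication formula for $\Gamma$, and the trigonometric normalizations in \eqref{0eq: defn of Bessel} are matched using the reflection formula; the power $1/(4y)$ falls out as the leading $w$-power. I would carry out the two terms $J_{\mu,m}$ and $J_{-\mu,-m}$ in \eqref{0eq: defn of Bessel} separately and recombine by linearity, and treat the nongeneric values $4\mu\in 2\BZ + m$ as limits by continuity in $\mu$.

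The \emph{main obstacle} is precisely the appearance of the exponential factor $e(\cos\theta/y)$. In Mellin space the entire computation is algebraic --- a balance of Gamma factors --- and no exponential is visible; the oscillatory factor materialises only upon Mellin inversion, because the relevant Gamma-product is the Mellin transform of a confluent (Whittaker-type) function over $\BC$, which factors as an exponential times a Bessel function. Equivalently, a direct power-series attack would compute each coefficient as a ratio of Gamma functions (a Weber--Schafheitlin / Mellin-of-Bessel evaluation) and would then have to resum the series into Weber's second exponential integral; controlling this resummation, the conditional convergence of the $\BC$-integral, and the legitimacy of the contour interchanges is where the hypothesis $|\Re \mu| < \frac12$ is essential. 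Since the angular Fourier decomposition in $p$ reduces the identity to a family of one-dimensional radial integrals, the radial exponential integral formula of the paper can serve as the per-frequency building block for this step.
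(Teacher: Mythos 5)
Your strategy is not the paper's (the paper proves Theorem \ref{thm: main 1} by showing that the left-hand side, suitably normalized, satisfies the two Bessel differential equations \eqref{5eq: differential equations}, computing its asymptotic at infinity by stationary phase plus the radial formula \eqref{0eq: int of J = IK}, and invoking the uniqueness statement of Lemma \ref{lem: Bessel equation}), and as written it has a genuine gap at its central step. The first half of your plan is sound: the Mellin transform of $\bfJ_{\mu,\,m}$ over $\BC^\times$ against a quasicharacter $\chiup_{s,\,p}$ is indeed a product of complex Gamma factors (this is the content of the kernel formula), and Tate's local functional equation converts the Fourier transform into multiplication by a further Gamma ratio. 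But the final step --- ``recognize this Barnes integral as the Mellin--Barnes representation of the right-hand side'' --- is precisely the theorem restated in Mellin-dual form, and you do not prove it. To carry it out you must evaluate, for \emph{every} angular frequency $p$, the Mellin transform over $\BC^\times$ of $e \lp \Tr \lp 1/2w \rp \rp \bfJ_{\frac 1 2 \mu,\, \frac 1 2 m} \lp 1/16 w^2 \rp / |w|$ in closed Gamma-ratio form. Over $\BR$ the analogous evaluation of $\int_0^\infty J_\nu(x) e^{\pm i x} x^{s-1}\, dx$ collapses by Gauss's summation of ${}_2F_1$ at unit argument; over $\BC$ the angular integral couples the power series in $z$ and $\overline z$ coming from \eqref{0def: J mu m (z)}, producing a higher hypergeometric series at unit argument for which no summation theorem is supplied. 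This is exactly the obstruction the paper identifies when it says a straightforward proof in the nonspherical case appears ``rather hopeless'': even the much weaker radial statement \eqref{0eq: int of J = IK} already reduces to the nontrivial combinatorial recurrence of Proposition \ref{prop: recursive S(a, v)}.

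Two further points. Your suggestion that Theorem \ref{thm: integral of J} serves as the ``per-frequency building block'' does not work: \eqref{0eq: int of J = IK} integrates $\bfJ_{\mu,\,m} \lp x e^{i\phi} \rp$ over the full circle with no angular character (so it only sees $p=0$) and against $\exp(-2\pi c x)$ rather than a Mellin kernel $x^{s-1}$, so it supplies none of the data needed at $p \neq 0$. And since $\bfJ_{\mu,\,m} \lp x e^{i\phi} \rp \asymp x^{-1/2}$ by \eqref{2eq: asymptotic of J mu m}, the double integral in \eqref{eq: main 1} is only conditionally convergent, so the interchange of the Barnes contour with the $z$-integral and the application of the local functional equation outside the region of absolute convergence both require a regularization argument (the paper handles the analogous issue via H\"ormander's partial integration in \S \ref{sec: Bound for E} and Lemma \ref{lem: compact convergence}) that you flag but do not carry out.
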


\begin{rem}\label{rem: Waldspurger}
	The identity \eqref{eq: main 1} reflects  the  Shimura-Waldspurger correspondence between the principal series $\pi_{\mu, \shskip  m} $ of $\PGL_2 (\BC)$ and the principal series $\pi_{\frac 1 2 \mu, \shskip  \frac 1 2 m}$ of  $\SL_2 (\BC)$.
	It should be noted that, unlike $\SL_2 (\BR)$, there is no nontrivial double cover of $\SL_2 (\BC)$ and there do not exist discrete series for  $\SL_2 (\BC)$. 
\end{rem}

In  a previous paper \cite{Qi-Sph},  using two formulae for classical Bessel functions, the author has proved \eqref{eq: main 1} in the spherical case when $m = 0$. For the nonspherical case, it seems however that a straightforward proof as in \cite{Qi-Sph} is almost impossible.  In this paper, our proof of \eqref{eq: main 1} is in an indirect manner and splits into two steps. 

In the first step, we shall prove a radial exponential integral formula (see \eqref{0eq: int of J = IK} in Theorem \ref{thm: integral of J} below), which is considered  weaker than  the formula \eqref{eq: main 1}, on the integral of the Bessel function $\bfJ_{ \mu,\shskip  m } \lp x e^{i\phi} \rp$ against the radial exponential function $\exp (- 2 \pi c x)$, instead of the Fourier kernel $e (- 2 x y \cos (\phi + \theta) )$. Interestingly, it turns out that the bulk of its proof is combinatorial. 

In the second step, we shall prove Theorem \ref{thm: main 1} by exploiting a soft   method that combines  asymptotic analysis of oscillatory integrals and a uniqueness result for ordinary differential equations. The weak exponential integral formula \eqref{0eq: int of J = IK} is used to determine the constant term in the asymptotic, whereas the method of stationary phase for double integrals is applied for the oscillatory term. 

\begin{thm}\label{thm: integral of J}
	Suppose that $|\Re \mu| < \frac 1 2$ and $m$ is even. We have
	\begin{equation}\label{0eq: int of J = IK}
	\int_0^{2 \pi} \int_0^\infty \bfJ_{\mu,\shskip  m} \lp x e^{i \phi} \rp \exp (- 2 \pi c x)  d x d \phi = \frac { 4 \pi i^m} {    c   }  K_{2 \mu} \lp \frac {4 \pi} {c} \rp   I_{\frac 1 2 m} \lp \frac {4 \pi  } {c} \rp,
	\end{equation}
	for $|\arg c | < \frac 1 2 \pi$, where $I_{\nu} (z)$ and $K_{\nu} (z)$  are the two kinds of modified Bessel functions of order $\nu$.
\end{thm}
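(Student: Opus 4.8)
The plan is to expand every factor into its Taylor series, reduce the double integral to two elementary integrations, and then recognize the resulting single series as a product of modified Bessel functions. Write $\zeta = 4\pi\sqrt{x}\,e^{i\phi/2}$, so that, since $m$ is even, the integrand is
\[
\bfJ_{\mu,m}(xe^{i\phi})=\frac{2\pi^2}{\sin(2\pi\mu)}\big(J_{-2\mu-\frac12 m}(\zeta)J_{-2\mu+\frac12 m}(\overline\zeta)-J_{2\mu+\frac12 m}(\zeta)J_{2\mu-\frac12 m}(\overline\zeta)\big).
\]
I would substitute $J_\nu(\zeta)=\sum_k \frac{(-1)^k}{k!\,\Gamma(\nu+k+1)}\left(\frac{\zeta}{2}\right)^{\nu+2k}$ into each of the four factors. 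First I must justify interchanging the resulting double series with the double integral: after taking absolute values the $\phi$-integral is harmless (the factors $e^{\pm\phi\Im\mu}$ are bounded on $[0,2\pi]$), and each radial integral $\int_0^\infty x^{s}e^{-2\pi(\Re c)x}\,dx$ is a Gamma value, with the worst term at $k=l=0$ requiring $\Re(\mp 2\mu)>-1$. This is exactly where the hypotheses $|\Re\mu|<\frac12$ and $|\arg c|<\frac12\pi$ (that is, $\Re c>0$) enter, and the ensuing absolute convergence legitimizes Fubini.

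Next I would perform the two elementary integrations. In each product the total power of $e^{i\phi}$ is an integer because $m$ is even (the same parity that makes $\bfJ_{\mu,m}$ single-valued), so the angular integral $\int_0^{2\pi}(\cdots)\,d\phi$ is pure orthogonality and collapses the double sum to the diagonal $l=k-\frac12 m$ for the first product and $l=k+\frac12 m$ for the second. The radial integral then produces a Gamma factor in the numerator, and after collecting the powers of $2$, $\pi$ and $c$ the first product reindexes ($n=k-\frac12 m$) to a single series in $(2\pi/c)^{-2\mu+\frac12 m+2n}$ with coefficient
\[
\frac{\Gamma(-2\mu+\tfrac12 m+2n+1)}{n!\,\Gamma(\tfrac12 m+n+1)\,\Gamma(-2\mu+n+1)\,\Gamma(-2\mu+\tfrac12 m+n+1)},
\]
together with an overall factor $i^m/c$ coming from $(-1)^{n+\frac12 m}(-1)^n=(-1)^{\frac12 m}=i^m$.

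The decisive step, which the introduction flags as essentially combinatorial, is to recognize this single series: it is precisely the product series for $I_{-2\mu}(4\pi/c)\,I_{\frac12 m}(4\pi/c)$, where
\[
I_\alpha(z)\,I_\beta(z)=\sum_{n\ge0}\frac{\Gamma(\alpha+\beta+2n+1)}{n!\,\Gamma(\alpha+n+1)\,\Gamma(\beta+n+1)\,\Gamma(\alpha+\beta+n+1)}\left(\frac{z}{2}\right)^{\alpha+\beta+2n},
\]
the modified-Bessel form of Watson's product formula, evaluated at $z=4\pi/c$ so that $z/2=2\pi/c$. Applying the substitution $(\mu,m)\mapsto(-\mu,-m)$ to the first product handles the second, giving $\frac{i^m}{c}I_{2\mu}(4\pi/c)I_{\frac12 m}(4\pi/c)$ (using $I_{-\frac12 m}=I_{\frac12 m}$ and $i^{-m}=i^m$ for $m$ even). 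Hence the whole integral equals
\[
\frac{2\pi^2}{\sin(2\pi\mu)}\cdot\frac{i^m}{c}\,I_{\frac12 m}\!\left(\frac{4\pi}{c}\right)\left(I_{-2\mu}\!\left(\frac{4\pi}{c}\right)-I_{2\mu}\!\left(\frac{4\pi}{c}\right)\right),
\]
and the identity $I_{-\nu}(z)-I_\nu(z)=\frac{2}{\pi}\sin(\nu\pi)K_\nu(z)$ with $\nu=2\mu$ turns the bracket into $\frac{2}{\pi}\sin(2\pi\mu)K_{2\mu}(4\pi/c)$, yielding exactly $\frac{4\pi i^m}{c}K_{2\mu}(4\pi/c)I_{\frac12 m}(4\pi/c)$.

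I expect the main obstacle to be the combinatorial identification of the third paragraph — equivalently, establishing the finite Vandermonde-type identity
\[
\sum_{j+p=n}\frac{1}{j!\,p!\,\Gamma(\alpha+j+1)\,\Gamma(\beta+p+1)}=\frac{\Gamma(\alpha+\beta+2n+1)}{n!\,\Gamma(\alpha+n+1)\,\Gamma(\beta+n+1)\,\Gamma(\alpha+\beta+n+1)}
\]
underlying the product formula — together with the bookkeeping of powers and signs needed for the prefactors to assemble cleanly; the nongeneric values $4\mu\in2\BZ+m$ are then covered by continuity in $\mu$.
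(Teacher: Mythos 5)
Your proof is correct, but it takes a genuinely different --- and considerably more direct --- route than the paper. The paper never touches the power series of $\bfJ_{\mu,\,m}$: it starts from the representation--theoretic integral formula \eqref{2eq: integral repn of J mu m} (valid only for $|\Re \mu|<\frac18$), evaluates the inner $x$-integral by \eqref{2eq: integral exp}, which produces Kummer's function $M(k+1;2k+1;\cdot)$, then feeds in the Euler integral for $M$, the recurrence formulae for $I_\nu$ and $K_\nu$, and integrates by parts; the boundary term at $\varv=1$ gives the right-hand side of \eqref{0eq: int of J = IK}, the entire combinatorial apparatus of \S \ref{sec: combinatorial} together with Proposition \ref{prop: recursive S(a, v)} is needed to show the leftover integral vanishes, and analytic continuation then extends the range to $|\Re\mu|<\frac12$. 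You instead expand the four classical Bessel factors into their defining series, let the $\phi$-integral kill everything off the diagonals $l=k\mp\frac12 m$, evaluate the radial integrals as Gamma values, and recognize the surviving single series as Watson's product series for $I_{-2\mu}(4\pi/c)\,I_{\frac12 m}(4\pi/c)$ (Watson \S 5.41); I have checked the bookkeeping of exponents and signs, and it does assemble exactly to $\frac{4\pi i^m}{c}K_{2\mu}(4\pi/c)I_{\frac12 m}(4\pi/c)$. What your route buys is substantial: it bypasses \S \ref{sec: combinatorial} and Proposition \ref{prop: recursive S(a, v)} entirely, its only combinatorial input is the classical Vandermonde-type Gamma identity underlying Watson's formula, and it works directly on the full strip $|\Re\mu|<\frac12$ with no continuation step. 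Two points to write out carefully in a final version: (i) the Fubini step needs the quantitative bound $\Gamma(k+l+1-2\Re\mu)\leqslant C\, 2^{k+l}\,k!\,l!\,(k+l+1)^{2|\Re\mu|}$, so that the doubly indexed sum of absolute integrals is dominated by a product of two entire exponential-type series --- the hypothesis $|\Re\mu|<\frac12$ enters only through the $k=l=0$ terms of the two products, exactly as you say; (ii) the principal branch $\sqrt{x e^{i\phi}}$ is not literally $\sqrt{x}\,e^{i\phi/2}$ on all of $[0,2\pi)$, but since the total power of $e^{i\phi/2}$ in every term is an even integer the ambiguity is immaterial (and the nongeneric case $2\mu\in\BZ$ is indeed handled by continuity, just as the paper handles its own restriction to generic $\mu$).
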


Although it is  not visible in the statement, there is a remarkable distinction between the spherical and nonspherical cases in the proof of Theorem \ref{thm: integral of J}. It comes from Kummer's confluent hypergeometric function $M  \lp \frac 1 2 m  + 1; m+1; z \rp$ arising in the proof (see \S \ref{sec: Kummer} and \S \ref{sec: reduction}), and makes the proof of the nonspherical case considerably harder. As alluded to above, when $ m = 2k \geqslant 2$, the identity \eqref{0eq: int of J = IK} in Theorem \ref{thm: integral of J} may be reduced to a complicated combinatorial recurrence identity as follows, 
\begin{align*} 
\sum_{n = 0}^{k}  (-)^n   C_{2k}^{k - n}   \sum_{r = 0}^{\left\lfloor   n/2 \right\rfloor} (-)^r (2/a)^{n-2r}  \big( C_{n-r}^r + C_{n-r-1}^{r-1} \big) \shskip  \partial_{\varv}^{ n-2r } \big( I_n (a \varv) \varv^k (1-\varv)^{1-k} \big) = 0,
\end{align*}
where $C^r_n$ denotes the binomial coefficient. Moreover, we remark that, when searching for a straightforward proof of  Theorem \ref{thm: main 1}, such a distinction persists and makes our attempts rather hopeless. 

	

\vskip 7 pt

Finally, we would also like to interpret Theorem \ref{thm: main 1} in the theory of distributions. 
Let   $  \SS (\BC)$ denote the space of Schwartz functions on $\BC$, that is, smooth functions  on $\BC$ that rapidly decay at infinity along with all of their derivatives. 
If rapid decay also occurs at zero, then we say the functions are   Schwartz functions on $ \BC \smallsetminus \{0\} $, and the space of such functions is denoted by  $\SS (\BC \smallsetminus \{0\})  $.

The Fourier transform $\widehat f  $ of a Schwartz function $f \in \SS(\BC)$ is defined by
\begin{equation*}
\widehat f (u) = \sideset{ }{_\BC }{\iint} \shskip  f (z) e(- \Tr  (uz) ) \shskip  i d z   \nwedge   d \overline z,
\end{equation*}
with $\Tr (z) = z + \overline z$. We have $\widehat{\widehat{f}} (z) = f (- z) $.

\begin{cor}\label{cor: main} Let $\mu$ be a complex number and $m$ be an even integer.
	We have
	\begin{equation}\label{eq: main 2}
	\begin{split}
	\sideset{ }{_{\BC \smallsetminus \{0\}} }{\iint}  \hskip - 2 pt   \bfJ_{\mu,\shskip  m} \lp z \rp    \widehat f (z)  \frac {i d z   \nwedge   d \overline z} {  {|z|}}   = \frac  1 2 \sideset{ }{_{\BC \smallsetminus \{0\}} }{\iint}     e \bigg( \Tr  \bigg( \frac 1 {2 u} \bigg) \bigg) \hskip - 1 pt \bfJ_{\frac 1 2 \mu,\shskip  \frac 1 2 m} \bigg(   \frac 1  { 16 u ^{ 2} } \bigg) \hskip - 1 pt f (u) \frac {i d u  \nwedge  d \overline u} {  {|u|}},
	\end{split}
	\end{equation}
	for all $ f \in \SS (\BC )$, under the assumption  $|\Re \mu | < \frac 1 2$. Furthermore, \eqref{eq: main 2} remains valid for all values of $\mu$ if one assumes $\widehat f \in \SS (\BC \smallsetminus \{0\})$.
\end{cor}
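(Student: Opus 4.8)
The plan is to recognize that \eqref{eq: main 2} is simply the Parseval (distributional pairing) reformulation of the pointwise identity \eqref{eq: main 1}: one pairs both sides of Theorem \ref{thm: main 1}, viewed as functions of $u = |u|e^{i\arg u}$ through $y = |u|$ and $\theta = \arg u$, against the test function $f$, and recovers $\widehat f$ on the left by interchanging the order of integration. Concretely, I would first pass to polar coordinates $z = xe^{i\phi}$, under which $i\,dz\nwedge d\overline z/|z| = 2\,dx\,d\phi$, so that the left side of \eqref{eq: main 2} becomes $2\int_0^{2\pi}\!\int_0^\infty \bfJ_{\mu,m}(xe^{i\phi})\,\widehat f(xe^{i\phi})\,dx\,d\phi$. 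Substituting $\widehat f(xe^{i\phi}) = \iint_{\BC} f(u)\,e(-\Tr(uxe^{i\phi}))\,i\,du\nwedge d\overline u$ and noting $\Tr(uxe^{i\phi}) = 2|u|x\cos(\phi + \arg u)$, the inner $(x,\phi)$-integral is exactly the left-hand side of \eqref{eq: main 1} with $y = |u|$ and $\theta = \arg u$.

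Applying Theorem \ref{thm: main 1}, the inner integral collapses to $\tfrac{1}{4|u|}\,e(\cos(\arg u)/|u|)\,\bfJ_{\frac12\mu,\frac12m}(1/(16|u|^2e^{2i\arg u}))$. Here I would invoke the two elementary identities $u^2 = |u|^2 e^{2i\arg u}$ and $\Tr(1/2u) = \tfrac1{2u} + \tfrac1{2\overline u} = \Re(u)/|u|^2 = \cos(\arg u)/|u|$ to rewrite the argument as $1/(16u^2)$ and the phase as $e(\Tr(1/2u))$. Reassembling and converting the measure back via $i\,du\nwedge d\overline u/|u| = 2\,ds\,d\psi$ reproduces precisely the right-hand side of \eqref{eq: main 2}.

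The main obstacle is the justification of this interchange of integration. The radial integral in \eqref{eq: main 1} converges only conditionally: since $\bfJ_{\mu,m}(xe^{i\phi}) = O(x^{-1/2})$ with oscillation as $x\to\infty$, the double integral is not absolutely convergent, and Fubini cannot be applied directly (indeed $\iint_{\BC}|\bfJ_{\mu,m}(z)|\,i\,dz\nwedge d\overline z/|z|$ diverges at infinity). I would resolve this by inserting a convergence factor $\exp(-\varepsilon x)$ with $\varepsilon > 0$. For fixed $\varepsilon$ the resulting triple integral converges absolutely — here the hypothesis $|\Re\mu| < \tfrac12$ is exactly what guarantees integrability of $\bfJ_{\mu,m}(xe^{i\phi}) = O(|z|^{-2|\Re\mu|})$ near $z = 0$ against $dx\,d\phi$, while the Schwartz decay of $f$ together with the $O(x^{-1/2})$ decay of $\bfJ_{\mu,m}$ controls the tails — so Fubini applies. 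Letting $\varepsilon \to 0^+$, on the $u$-side one recovers $\widehat f$ by dominated convergence, and on the Bessel side the Abel-regularized integral recovers the improper value of \eqref{eq: main 1} in the sense in which Theorem \ref{thm: main 1} is established. This yields \eqref{eq: main 2} for $\widehat f \in \SS(\BC)$ under $|\Re\mu| < \tfrac12$.

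For the extension to arbitrary $\mu$ under the stronger hypothesis $f \in \SS(\BC\smallsetminus\{0\})$, I would argue by analytic continuation in $\mu$. The right-hand side of \eqref{eq: main 2} is holomorphic in $\mu$ on all of $\BC$: the rapid decay of $f$ at both $0$ and $\infty$ dominates the polynomial growth of $\bfJ_{\frac12\mu,\frac12m}(1/16u^2)$ as $u\to\infty$ and tames the essential singularity of $e(\Tr(1/2u))\bfJ_{\frac12\mu,\frac12m}(1/16u^2)$ at $u = 0$, so the integral and all of its $\mu$-derivatives converge locally uniformly. The left-hand side is holomorphic in $\mu$ on the strip $|\Re\mu| < \tfrac12$ and, by the first part, coincides there with the right-hand side. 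Since a holomorphic function on the strip that admits an entire extension is determined by its values on the strip, the identity persists for every $\mu \in \BC$, with the left-hand side interpreted through its meromorphic continuation in $\mu$ (equivalently, through the regularization at $z = 0$ that is unambiguous precisely on $|\Re\mu| < \tfrac12$). This completes the plan.
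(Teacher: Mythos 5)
Your reduction (polar coordinates, insert the definition of $\widehat f$, recognize the inner $(x,\phi)$-integral as the left side of \eqref{eq: main 1}, and do the bookkeeping with $u^2=|u|^2e^{2i\arg u}$ and $\Tr(1/2u)=\cos(\arg u)/|u|$) is exactly the paper's, and you correctly identify the one real difficulty: the interchange of the $z$- and $u$-integrations is not covered by Fubini because $\bfJ_{\mu,\,m}(z)=O(|z|^{-1/2})$ only. Where your proposal has a genuine gap is in the fix. After inserting $e^{-\varepsilon x}$ and applying Fubini for fixed $\varepsilon$, you must pass $\varepsilon\to0^+$ \emph{inside} the $u$-integral, i.e.\ you need a majorant for $G_\varepsilon(u)=\int_0^{2\pi}\int_0^\infty \bfJ_{\mu,\,m}(xe^{i\phi})e^{-\varepsilon x}e(-2x|u|\cos(\phi+\arg u))\,dx\,d\phi$ that is uniform in $\varepsilon\in(0,1]$ and integrable against $|f(u)|$ over all of $\BC\smallsetminus\{0\}$. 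The trivial bound $\iint|\bfJ_{\mu,\,m}|e^{-\varepsilon x}\,dx\,d\phi$ blows up as $\varepsilon\to0$, so the domination can only come from the oscillation; establishing it means redoing the entire stationary-phase/partial-integration analysis of \S 5 uniformly in $\varepsilon$ \emph{and} uniformly in $u$ — including the regime $u\to0$, where $f$ does not help and the stationary point $x_0\asymp|u|^{-2}$ escapes to infinity, and where your appeal to ``dominated convergence'' is precisely the unproved step. (The pointwise Abelian limit $G_\varepsilon(u)\to G_0(u)$ also presupposes the improper convergence of \eqref{eq: main 1}, i.e.\ something like Lemma \ref{lem: compact convergence}, which you should cite.) The paper sidesteps all of this: it applies $\mathrm{D}=-\partial_u\partial_{\overline u}+1$ to $e(-\Tr(zu))$ and integrates by parts in $u$, which plants the factor $1/(4\pi^2x^2+1)$ on the $z$-integrand and makes the double integral absolutely convergent, so Fubini applies outright; the only remaining delicacy is undoing the partial integration, for which the compact convergence of Lemma \ref{lem: compact convergence} suffices.

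On the second assertion, note that the paper's proof actually uses the hypothesis $\widehat f\in\SS(\BC\smallsetminus\{0\})$: that is what makes the \emph{left-hand} side absolutely convergent at $z=0$ for every $\mu$, so the identity holds as a literal integral identity and no meromorphic continuation or regularization of the left side is needed (your analytic-continuation-in-$\mu$ argument is fine for the right side, but reinterpreting the left side ``through its continuation'' silently changes the statement being proved). Hypothesizing $f\in\SS(\BC\smallsetminus\{0\})$ does not force $\widehat f$ to vanish at the origin, so it does not by itself repair the convergence of the left-hand side.
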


\subsection{An Application in Representation Theory: the Bessel Identity over the Complex Field}

Let 
\begin{align*}
N = \left\{ \begin{pmatrix}
1 & z \\ & 1 
\end{pmatrix} : z \in \BC \right\}, \hskip 10 pt A =  \left\{ \begin{pmatrix}
a &   \\ & a\-
\end{pmatrix} : a  \in \BC \smallsetminus \{0\} \right\}.
\end{align*} 
Let $\psi (z) = e (\Tr \shskip  z )$, viewed  as a character on $N$. 
Let $\pi$ be an  infinite-dimensional unitary irreducible representation of $\SL_2 (\BC)$. We attach to  $\pi$   a certain function $j_{\pi, \hskip 0.5 pt \psi}$    on $\SL_2 (\BC)$   which    is both left and right $(\psi, N)$-equivariant. When $\pi$ is trivial on the central (that is, a representation of $\PSL_2 (\BC) = \PGL_2 (\BC)$), we attach  a function $ i_{\hskip 0.5 pt \pi, \hskip 0.5 pt \psi} $  which  is left $A$-invariant  and right $(\psi, N)$-equivariant. $j_{\pi, \hskip 0.5 pt \psi}$ and $ i_{\hskip 0.5 pt \pi, \hskip 0.5 pt \psi} $  are called the Bessel function and the relative Bessel function for $\pi$ respectively. We stress that $\pi$ is determined by either of  these two functions.

As a consequence of Corollary \ref{cor: main}, we have the following Bessel identity. 
\begin{thm} Let   $\pi $ be the principal series $\pi_{\mu, \shskip  m} $ of $\PGL_2 (\BC)${\rm($=\PSL_2 (\BC)$)} and $\sigma$ be the principal series $\pi_{\frac 1 2 \mu, \shskip  \frac 1 2 m}$ of  $\SL_2 (\BC)$ {\rm(}see Remark {\rm\ref{rem: Waldspurger}}{\rm)}. For $z \in \BC \smallsetminus \{0\}$, we have
	\begin{align}
	i_{\hskip 0.5 pt \pi, \hskip 0.5 pt \psi} \begin{pmatrix}
	z / 4 & 1 \\ 1 & 
	\end{pmatrix} 
	= \frac {2 \epsilon (\pi, 1/2) \shskip  \psi     \lp   2 / z \rp |z|} {L(\pi, 1/2)}  j_{\sigma, \hskip 0.5 pt \psi} \begin{pmatrix}
	& - z\- \\ z \\ 
	\end{pmatrix}, 
	\end{align}
	in which $L (\pi, 1/2)$ and  $\epsilon (\pi, 1/2)$ are the central values of the $L$-factor and the $\epsilon$-factor associated with $ \pi $. 
\end{thm}

This is the complex analogue of \cite[Theorem 1.1]{BaruchMao-Real} and, along with the real and non-Archimedean Bessel identities in \cite{BaruchMao-Real} and \cite{BaruchMao-NA}, may be used to establish the Waldspurger formula over an arbitrary number field. These however do not seem to fit in the analytic theme of this paper and will be presented in two forthcoming papers   in collaboration with Jingsong Chai.\footnote{These were done very recently while this paper was under peer review. See \cite{Chai-Qi-Bessel} and \cite{Chai-Qi-Wald}.}


\section{Preliminaries}

\subsection{Classical Bessel Functions}

\subsubsection{Basic Properties of  $J_{\nu} (z)$, $H^{(1 )}_{\nu}  (z) $ and $H^{(2)}_{\nu}  (z) $} Let $\nu $ be a complex number.
Let $J_{\nu} (z)$, $H^{(1,\shskip  2)}_{\nu}  (z) $ denote the   Bessel function of the first kind and the Hankel functions of order $\nu$. They all satisfy the Bessel equation
\begin{equation}\label{eq: Bessel Equation}
z^2 \frac {d^2 w} {d z^2}  (z) + z \frac {d w} {d z} (z) + \lp z^2 - \nu^2 \rp  w(z) = 0.
\end{equation}
$J_{\nu} (z)$ is defined by the series    (see \cite[3.1 (8)]{Watson})
\begin{equation}\label{2def: series expansion of J}
J_{\nu} (z) = \sum_{n=0}^\infty \frac {(-)^n \lp \frac 1 2 z \rp^{\nu+2n } } {n! \Gamma (\nu + n + 1) }.
\end{equation}
When $\nu$ is not a negative integer, we have the bound (see for instance \cite[3.13 (1) or 3.31 (1, 2)]{Watson})
\begin{equation}\label{2eq: bound for J}
\left| J_{\nu} (z) \right| \lll_{\nu} \left|z^{ \nu}\right|, \hskip 10 pt |z| \leqslant 1.
\end{equation}

We have the following connection formulae  (see \cite[3.61 (1, 2)]{Watson})
\begin{align}
\label{2eq: J and H} 
& J_\nu (z) = \frac {H_\nu^{(1)} (z) + H_\nu^{(2)} (z)} 2, \hskip 30 pt 
J_{-\nu} (z) =  \frac {e^{\pi i \nu} H_\nu^{(1)} (z) + e^{-\pi i \nu} H_\nu^{(2)} (z) } 2.
\end{align}

We have the following asymptotics of $H^{(1)}_{\nu} (z)$ and $H^{(2)}_{\nu} (z)$ at infinity (see \cite[7.2 (1, 2)]{Watson}),
\begin{equation}\label{2eq: asymptotic H (1)}
H^{(1)}_{\nu} (z) = \lp \frac 2 {\pi z} \rp^{\frac 1 2} e^{ i \lp z - \frac 12 {\pi \nu}    - \frac 14 \pi    \rp }  \lp 1 +  \frac {   1   - 4 \nu^2} {8 i z}  + O \lp \frac 1 {|z|^{ 2}} \rp \rp,
\end{equation}
\begin{equation}\label{2eq: asymptotic H (2)}
H^{(2)}_{\nu} (z) = \lp \frac 2 {\pi z} \rp^{\frac 1 2} e^{ - i \lp z - \frac 12 {\pi \nu}    - \frac 1 4 \pi   \rp } \lp 1 - \frac {   1   - 4 \nu^2} {8 i z}  + O \lp \frac 1 {|z|^{ 2}} \rp \rp,
\end{equation}
of which \eqref{2eq: asymptotic H (1)} is valid when $z$ is such that $- \pi +  \delta \leqslant \arg z \leqslant 2 \pi -   \delta$,  and  \eqref{2eq: asymptotic H (2)} when $- 2 \pi +  \delta \leqslant \arg z \leqslant   \pi -  \delta$,   $  \delta  $ being any positive acute angle. Consequently, 
\begin{equation}\label{2eq: asymptotic J}
J_{\nu} \lp z \rp =  \lp \frac 2 {\pi z} \rp^{\frac 12}  \cos  \lp   z - \tfrac 1 2 \pi \nu - \tfrac 1 4 \pi \rp + O \lp  {|z|^{- \frac 3 2}} \rp,
\end{equation}
for $|\arg z| \leqslant \pi - \delta$.

According to \cite[3.63]{Watson}, $H^{(1)}_{\nu} (z)$ and $H^{(2)}_{\nu} (z)$  form a fundamental system of solutions of Bessel's equation.


\subsubsection{Basic Properties of  $I_{\nu} (z)$ and $K_{\nu}  (z) $} Let $I_{\nu}  (z) $ and $K_{\nu} (z)$ denote the modified Bessel function of the first and second kind of order $\nu$, which are defined by \cite[3.7 (2, 6)]{Watson},
\begin{align}
\label{2eq: I, K and J}  I_{\nu} (z) = e^{- \frac 1 2 \pi i \nu} J_{\nu} \big(e^{\frac 1 2 \pi i} z\big), \hskip 10 pt K_{\nu} (z) = \tfrac 1 2 \pi \shskip  \frac {I_{- \nu} (z) - I_{\nu} (z) } {\sin (\pi \nu)}. 
\end{align}
We have the following asymptotics of $I_{\nu}  (z) $ and $K_{\nu} (z)$ at infinity (\cite[7.23 (1, 2, 3)]{Watson}),
\begin{align}
\label{2eq: asymptotic I} & I_{\nu} (z) = \frac {e^z} {(2 \pi z)^{\frac 1 2}} \lp 1 + O\lp |z|\- \rp \rp, \\
\label{2eq: asymptotic K} & K_{\nu} (z) = \lp \frac {\pi} {2 z} \rp^{\frac 1 2} e^{- z} \lp 1 + O\lp |z|\- \rp \rp,
\end{align}
of which \eqref{2eq: asymptotic I} is valid when $z$ is such that $| \arg z | \leqslant \frac 1 2 \pi -   \delta$,  and  \eqref{2eq: asymptotic K} when $| \arg z | \leqslant  \frac 3 2 \pi -    \delta$.

In addition, we have the following recurrence formulae for $I_{\nu}  (z) $ and $K_{\nu} (z)$ (see \cite[3.71 (3)]{Watson} and \cite[9.6.29]{A-S}),
\begin{align}
\label{2eq: recurrence, I} z I_{\nu }' (z) + \nu I_{\nu } (z) & =   z I_{\nu - 1} (z), \\
\label{2eq: recurrence, I and K}
2^{n} I_{\nu}^{(n)} (z) = \sum_{r=0}^n C^{r}_{n} I_{\nu+n-2r} (z), \hskip 10 pt & (-2)^{n} K_{\nu}^{(n)} (z) = \sum_{r=0}^n C^{r}_{n} K_{\nu+n-2r} (z),
\end{align}
where $C^r_n$ is the binomial coefficient.

\subsubsection{Integral Formulae}

First, we shall need the following integral formula (\cite[8.6 (14)]{ET-II}),  for $ y > 0$, $|\arg c| < \frac 1 2 \pi$ and $\Re \nu > - 2$,
\begin{equation}\label{2eq: integral exp}
\int_0^\infty J_{\nu} ( x y) e^{- c x^2} x  d x = \frac {\Gamma \lp \frac 1 2   \nu + 1 \rp y^{\nu} } {2^{\nu+1}  \Gamma (\nu + 1) c^{\frac 1 2 \nu + 1} } 
M \lp \frac 1 2 \nu + 1; \nu+1; - \frac {y^2} {4 c} \rp,
\end{equation}
where $M (a; b; z)$ is Kummer's confluent hypergeometric function (see \S \ref{sec: Kummer}). 
Second, when $|\arg z| < \frac 1 2 \pi$, we have the integral representation of $K_{\nu} (z)$ (see \cite[6.22, (5, 7)]{Watson}),
\begin{equation}\label{2eq: integral repn of K}
K_{\nu} (z) = \frac 1 2 \int_0^\infty y^{\nu - 1} e^{- \frac 1 2 z \lp y + y\- \rp} d y .
\end{equation}
Furthermore, when the order $\nu = n$ is an integer, we have the integral representations of Bessel for $J_n (z)$ and $I_n (z)$ as follows (see \cite[2.2 (1)]{Watson}),
\begin{align}\label{2eq: integral repn of J}
&J_{n} (z) = (-)^n J_{- n} (z) = \frac { 1 } {2 \pi i^{\hskip 0.5 pt n}} \int_0^{2\pi} e^{- i n \phi + i z \cos \phi } d \phi , 
\\
\label{2eq: integral repn of I}
&\hskip 10 pt I_{n} (z) =   I_{- n} (z) = \frac { (-1)^n } {2 \pi} \int_0^{2\pi} e^{- i n \phi - z \cos \phi  } d \phi.
\end{align}

\subsection{Kummer's Confluent Hypergeometric Functions}\label{sec: Kummer}
When $b $ is not a nonpositive integer, Kummer's confluent hypergeometric Function $M (a; b; z)$ is defined by
\begin{align}
M (a; b; z) = {_1F_1} (a; b; z) = \frac {\Gamma (b)} {\Gamma (a)} \sum_{n=0}^\infty \frac {\Gamma (a + n)} {\Gamma (b+n) n!} z^n.
\end{align}
It is clear that
\begin{equation}\label{2eq: M(a; a)}
M (b; b; z) = e^{z}.
\end{equation}
According to \cite[13.2.1]{A-S}, when $ \Re b > \Re a > 0 $, we have
\begin{align}\label{2eq: integral repn of M(a; b)}
\frac {\Gamma (b-a) \Gamma (a)} {\Gamma (b)} M (a; b; z) = \int_0^1 e^{z \varv} \varv^{a-1} (1-\varv)^{b-a-1} d \varv.
\end{align}

\subsection{\texorpdfstring{Preliminaries on the Bessel Function $\bfJ_{\mu,\shskip  m} (z)$}{Preliminaries on the Bessel Function $J_{\mu,\shskip  m} (z)$}}



\subsubsection{}

Replacing $d/dz$ by $\partial / \partial z$, we denote by $\nabla_{\nu}$ the differential operator that occurs in \eqref{eq: Bessel Equation}, namely,
\begin{equation}\label{2eq: nabla}
\nabla_{\nu} = z^2 \frac {\partial^2 } {\partial z^2}  + z \frac {\partial  } {\partial z} +   z^2 - \nu^2    .
\end{equation}
Its conjugation will be  denoted by $\overline \nabla_{\nu}$,
\begin{equation}\label{2eq: nabla bar}
\overline \nabla_{\nu} = \overline z^2 \frac {\partial^2 } {\partial \overline z^2}  + \overline z \frac {\partial  } {\partial \overline z} +  \overline z^2 - \nu^2    .
\end{equation}
From the definition of $\bfJ_{ \mu,\shskip   m} (z) $ as in (\ref{0def: J mu m (z)}, \ref{0eq: defn of Bessel}), we infer that 
\begin{align}\label{2eq: nabla J = 0}
\nabla_{2 \mu + \frac 1 2 m} \lp \bfJ_{\mu,\shskip  m} \big(   z^2 /16 \pi^2 \big) \rp = 0, \hskip 10 pt \overline \nabla_{2 \mu - \frac 1 2 m} \lp \bfJ_{\mu,\shskip  m} \big(   z^2 /16 \pi^2 \big) \rp = 0.
\end{align} 

It follows from \eqref{2eq: bound for J} that if $\mu$ is generic, that is $4 \mu \notin 2 \BZ + m$, then
\begin{align}\label{2eq: bound for J mu m}
\left|\bfJ_{ \mu,\shskip   m} (z) \right| \lll_{\, \mu,\shskip   m} \left| |z|^{- 2 \mu} \right| + \left| |z|^{ 2 \mu} \right|, \hskip 10 pt |z| \leqslant 1.
\end{align} 
Some calculations by the formulae  of $ \left.( \partial J_{\nu}  (z) /\partial \nu ) \right|_{\nu = \pm n}$ in \cite[\S 3.52 (1, 2)]{Watson}, with nonnegative integer  $n$, would imply that in the generic case when $4 \mu \in 2 \BZ + m$ we have
\begin{align}\label{2eq: bound for J mu m, 2}
\left|\bfJ_{ \mu,\shskip   m} (z) \right| \lll_{\, \mu,\shskip   m}   |z|^{- 2 |\mu|} \log (2/|z|), \hskip 10 pt |z| \leqslant 1.
\end{align}

In view of the connection formulae in \eqref{2eq: J and H}, we have another expression of $ \bfJ_{ \mu,\shskip   m} (z)$ in terms of Hankel functions,
\begin{equation}\label{2eq: J = H1 + H2}
\bfJ_{ \mu,\shskip   m} (z) = \pi^2 i  \lp e^{2 \pi i \mu} H^{(1)}_{\mu,\shskip   m} \lp 4 \pi \sqrt z \rp + (-)^{m+1} e^{- 2 \pi i \mu} H^{(2)}_{\mu,\shskip   m} \lp 4 \pi \sqrt z \rp \rp,
\end{equation}
with the definition
\begin{equation}\label{7def: H (1, 2) mu m (z), n=2, C}
H^{(1,\shskip  2)}_{\mu,\shskip   m} (z) = H^{(1,\shskip  2)}_{2 \mu + \frac 1 2 m} \lp   z \rp  H^{(1,\shskip  2)}_{2 \mu - \frac 1 2 m} \lp  { \overline z} \rp.
\end{equation}
It follows from (\ref{2eq: asymptotic H (1)}, \ref{2eq: asymptotic H (2)}) that $ \bfJ_{ \mu,\shskip   m} \lp z   \rp $ admits the following asymptotic at infinity,
\begin{equation}\label{2eq: asymptotic of J mu m}
\begin{split}
\bfJ_{ \mu,\shskip   m} (z) =  \sum_{\pm} \frac {(\pm 1)^m} {2 \sqrt {|z|}} 
e \lp \pm 2 \big(\sqrt z + \sqrt {\overline z} \big) \rp \Bigg( 1 
  \pm \frac {1 - 4 \lp \mu + \frac 1 2 m \rp^2 } {8 i \sqrt z}  
  \pm \frac {1 - 4 \lp \mu - \frac 1 2 m \rp^2 } {8 i \sqrt {\overline z} } \Bigg) \\
 + O \lp |z|^{-\frac 3 2} \rp.
\end{split}
\end{equation}
In particular, combining \eqref{2eq: bound for J mu m} and \eqref{2eq: asymptotic of J mu m}, if we let  $\rho = |\Re \mu |$,  then 
\begin{equation}\label{2eq: bounds for J}
\bfJ_{ \mu,\shskip   m} \lp z \rp \lll_{\mu, \shskip m} \left\{\begin{split}
& 1 / \left| z  \right|^{2 \rho}, \hskip 11 pt \text{ if } |z| \leqslant 1, \\
& 1 /{\textstyle \sqrt {|z|}}, \hskip 10 pt \text{ if } |z| > 1.
\end{split}\right.
\end{equation} 
The first estimate in \eqref{2eq: bounds for J} is for generic $\mu$, but it remains valid in general if we let $ \rho > |\Re \mu | $ (see \eqref{2eq: bound for J mu m, 2}). 

\begin{lem}\label{lem: Bessel equation} 
	Let $f (z)$ be a solution of the following two differential equations,
	\begin{equation*}
	\nabla_{2 \mu + \frac 1 2 m} w = 0, \hskip 10 pt \overline \nabla_{2 \mu - \frac 1 2 m} w = 0,
	\end{equation*} 
	with differential operators $\nabla_{2 \mu + \frac 1 2 m} $ and $\overline \nabla_{2 \mu - \frac 1 2 m}$ defined as in \eqref{2eq: nabla} and \eqref{2eq: nabla bar}.
	Suppose further that $f (4 \pi z)$ admits the same asymptotic of $\bfJ_{\mu,\shskip  m} \lp z^2   \rp$, that is,
	\begin{align*}
	f (4 \pi z) \sim \frac {1} {2    {|z|}} 
	e \lp 2 (  z +   {\overline z} ) \rp +  \frac {(-1)^m} {2    {|z|}} 
	e \lp - 2 (  z +   {\overline z} ) \rp, \hskip 10 pt |z| \ra \infty.
	\end{align*}
	Then $ f (4 \pi z) = \bfJ_{\mu,\shskip  m}  ( z^2 )   $.
\end{lem}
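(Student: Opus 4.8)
The plan is to prove Lemma~\ref{lem: Bessel equation} by exploiting that the two differential equations $\nabla_{2\mu+\frac12 m} w = 0$ and $\overline\nabla_{2\mu-\frac12 m} w = 0$ have only a four-dimensional space of joint solutions, and then using the prescribed asymptotic to pin down $f$ uniquely within that space. First I would observe that, after the substitution $z \mapsto z^2/16\pi^2$ as in \eqref{2eq: nabla J = 0}, the equation $\nabla_{2\mu+\frac12 m} w = 0$ is Bessel's equation \eqref{eq: Bessel Equation} in the holomorphic variable with parameter $\nu_+ = 2\mu+\frac12 m$, whose solution space is spanned by $H^{(1)}_{\nu_+}$ and $H^{(2)}_{\nu_+}$ (recalled after \eqref{2eq: asymptotic J}); similarly $\overline\nabla_{2\mu-\frac12 m} w = 0$ is the conjugate Bessel equation in $\overline z$ with parameter $\nu_- = 2\mu-\frac12 m$, spanned by $H^{(1)}_{\nu_-}(\overline z)$ and $H^{(2)}_{\nu_-}(\overline z)$. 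A function $f$ killed by \emph{both} operators must therefore be a linear combination of the four products $H^{(j)}_{\nu_+}(z)\,H^{(k)}_{\nu_-}(\overline z)$ for $j,k\in\{1,2\}$, i.e. of the four functions $H^{(j,\,k)}_{\mu,\,m}$ in the notation of \eqref{7def: H (1, 2) mu m (z), n=2, C}.

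Next I would write $f(4\pi z)$ as $\sum_{j,k} c_{jk}\, H^{(j)}_{\nu_+}(4\pi z)\,H^{(k)}_{\nu_-}(4\pi\overline z)$ and compute the asymptotic of each product at infinity using the single-variable Hankel asymptotics \eqref{2eq: asymptotic H (1)}, \eqref{2eq: asymptotic H (2)}. Each product contributes a leading oscillatory factor $e(\pm 2z \pm 2\overline z)$ times $1/(2|z|)$ times a power-of-phase and constant; crucially the four sign combinations $(+,+),(+,-),(-,+),(-,-)$ in the exponentials are distinct, so the four products have linearly independent leading oscillatory behaviors. The prescribed asymptotic for $f(4\pi z)$ contains only the two ``diagonal'' frequencies $e(2(z+\overline z))$ and $e(-2(z+\overline z))$, with coefficients $1/(2|z|)$ and $(-1)^m/(2|z|)$ respectively, and no cross terms $e(\pm 2(z-\overline z))$. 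Matching coefficients of the four independent exponential frequencies forces the two cross coefficients $c_{12}, c_{21}$ to vanish and determines $c_{11}$ and $c_{22}$; comparing with the Hankel expression \eqref{2eq: J = H1 + H2} for $\bfJ_{\mu,\,m}$, one reads off that these surviving coefficients are exactly those making $f(4\pi z) = \bfJ_{\mu,\,m}(z^2)$.

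The main obstacle, and the step requiring genuine care rather than bookkeeping, is the validity of the separation-of-variables claim: I must justify that a function annihilated by both $\nabla_{\nu_+}$ (acting in $z,\partial_z$) and $\overline\nabla_{\nu_-}$ (acting in $\overline z,\partial_{\overline z}$) is genuinely a finite linear combination of products of one-variable solutions, with \emph{constant} coefficients. This is where one cannot simply invoke a product-of-ODE fact naively: a priori the ``constants'' of integration for the $z$-equation could depend antiholomorphically on $\overline z$, and vice versa. The clean way around this is to fix a base point, expand $f$ in the holomorphic Hankel basis with coefficients $a_j(\overline z)$ depending on $\overline z$, substitute into $\overline\nabla_{\nu_-} f = 0$, and use the linear independence of $H^{(1)}_{\nu_+}(z), H^{(2)}_{\nu_+}(z)$ to deduce that each $a_j(\overline z)$ itself solves the antiholomorphic Bessel equation, hence lies in the span of $H^{(1)}_{\nu_-}(\overline z), H^{(2)}_{\nu_-}(\overline z)$ with truly constant coefficients. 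One must also handle the nongeneric degenerate cases $4\mu \in 2\BZ + m$, where $H^{(1)}$ and $H^{(2)}$ may degenerate and a logarithmic second solution intervenes; there I would either argue by continuity in $\mu$ (both sides of the asymptotic-matching are continuous in $\mu$, the generic case being dense) or replace the basis by an appropriate fundamental system and rerun the frequency-matching argument, noting that the asymptotic still isolates the same four distinct exponential frequencies.
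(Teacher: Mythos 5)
Your proposal is correct and follows essentially the same route as the paper: expand $f$ in the four-dimensional space of products $H^{(j)}_{2\mu+\frac12 m}(z)H^{(k)}_{2\mu-\frac12 m}(\overline z)$, then use the Hankel asymptotics to force $c_{12}=c_{21}=0$ (the paper does this along the ray $z=-4\pi i x$, where those cross terms grow like $e^{8\pi x}$) and to read off $c_{11},c_{22}$ by comparison with \eqref{2eq: J = H1 + H2}. The only differences are to your credit but minor: you justify the separation-of-variables step (constancy of the coefficients) which the paper merely asserts, and your worry about the degenerate case $4\mu\in 2\BZ+m$ is unnecessary since $H^{(1)}_\nu$, $H^{(2)}_\nu$ remain a fundamental system for all $\nu$.
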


\begin{proof}
	From the theory of differential equations,  $f (z)$ may be uniquely written as a linear combination of $H^{(k)}_{2 \mu + \frac 1 2 m} (z) H^{(l)}_{2 \mu - \frac 1 2 m} (\overline z)$, with $k, l = 1, 2$, namely, 
	$$ f(z) = \underset{k,\shskip  l = 1,\shskip  2}{\sum \sum} \ c_{k l}\shskip  H^{(k)}_{2 \mu + \frac 1 2 m} (z) H^{(l)}_{2 \mu - \frac 1 2 m} (\overline z).$$
	Letting $z = - 4 \pi i x $, with $x$ positive, we infer from (\ref{2eq: asymptotic H (1)}, \ref{2eq: asymptotic H (2)}) that if $c_{12} \neq 0$ then
	\begin{equation*}
	f (-4 \pi i x) \sim \frac {c_{12}} {2 \pi i^m x} \exp (8 \pi x), \hskip 10 pt x \ra \infty.
	\end{equation*}
	However, the asymptotic of $ \bfJ_{\mu,\shskip  m} \lp - x^2 \rp $ is $\lp 1 + (-1)^m \rp / 2x$, so we must have  $c_{12} = 0$ in order for  $f (4 \pi i x) $ and $ \bfJ_{\mu,\shskip  m} \lp - x^2 \rp $ to have the same asymptotic. Similarly, $c_{21} = 0$.
	Choosing $z = 2k \pi , (2k-1) \pi$, with $k$ positive integer, and letting $k \ra \infty$, it follows that $c_{11} = \pi^2 i    e^{2 \pi i \mu}$ and $ c_{22} =  \pi^2 i  (-1)^{m+1}  e^{- 2 \pi i \mu}$. Hence, by \eqref{2eq: J = H1 + H2}, we must have  $ f (4 \pi z) = \bfJ_{\mu,\shskip  m}   (z^2)    $.
\end{proof}

\subsubsection{An Integral Representation of $\bfJ_{ \mu,\shskip   m}$}
In the polar coordinates, we have the following integral representation of $ \bfJ_{ \mu,\shskip   m} \lp x e^{i \phi} \rp $ (see \cite[Corollary 6.17]{Qi-Bessel} and \cite[Theorem 12.1]{B-Mo}),
\begin{equation}\label{2eq: integral repn of J mu m}
\bfJ_{\mu,\shskip  m} \lp x e^{  i \phi} \rp = 4 \pi i^m \int_0^\infty   y^{4 \mu - 1}  E \big(y e^{\frac 1 2 i \phi} \big)^{-m} J_m \lp 4 \pi \sqrt x Y \big(y e^{\frac 1 2 i \phi} \big) \rp d y,
\end{equation}
with
\begin{align*}
& Y (z) = \left| z + z^{-1}  \right|, 
\hskip 10 pt E (z) = \lp z + z\- \rp /\left| z + z^{-1}  \right|.
\end{align*}
The integral on the right of \eqref{2eq: integral repn of J mu m} is absolutely convergent if $|\Re \mu | < \frac 1 8$.



\subsection{Stationary Phase Integrals}

The lemma below is a special case of \cite[Theorem 7.7.5]{Hormander}.

\begin{lem}\label{lem: stationary phase}
	Let $K \subset \BC \smallsetminus\{0\}$ be a compact set, $X$ an open neighbourhood of $K$. In the polar coordinates, if $u(x, \phi)  = u \lp x e^{i \phi} \rp \in C^{2 }_0 (K)$, $f(x, \phi) =  f \lp x e^{i \phi} \rp  \in C^{4} (X)$ and $ f  $ is a real valued function on $X$, $  f (x_0, \phi_0) = 0$, $ f' (x_0, \phi_0) = 0$, $\det f'' (x_0, \phi_0) \neq 0$ and $ f'  \neq 0$ in $K \smallsetminus \{ (x_0, \phi_0) \}$, then for $y > 0$
	\begin{equation*}
	\begin{split}
	  \sideset{}{_K}{\iint}  u(x , \phi ) e \lp y f (x , \phi ) \rp d x d \phi = \frac {   u (x_0, \phi_0) e \lp y f(x_0, \phi_0) + \frac 1 4 \rp }  { y \sqrt {\det f'' (x_0, \phi_0) }  }   + O \lp \frac 1 {y^2}  \rp.
	\end{split}
	\end{equation*}
	Here the implied constant depends only on  $f$, $u$ and $K$. 
\end{lem}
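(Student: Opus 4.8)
The plan is to obtain the lemma as a direct specialization of H\"ormander's stationary phase estimate \cite[Theorem 7.7.5]{Hormander} to dimension $n = 2$, retaining only the leading term of the asymptotic expansion. First I would fix the dictionary between the two statements. Writing points of $\BC \smallsetminus \{0\}$ in polar coordinates $(x, \phi)$, I regard $K$ and $X$ as subsets of $\BR^2$ and $u, f$ as functions of the two real variables $(x, \phi)$; I set the large parameter to be $\omega = 2 \pi y$, so that the oscillatory kernel becomes $e(y f) = e^{i \omega f}$. Because $f$ is real valued, the nonnegativity hypothesis $\Im f \geqslant 0$ of H\"ormander's theorem holds with equality, and in particular $\Im f(x_0, \phi_0) = 0$. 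The remaining hypotheses of the theorem --- a single critical point $(x_0, \phi_0)$ at which $f' = 0$, the nondegeneracy $\det f''(x_0, \phi_0) \neq 0$, and $f' \neq 0$ throughout $K \smallsetminus \{(x_0, \phi_0)\}$ --- are exactly those assumed here.

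With this dictionary in place, I would apply \cite[Theorem 7.7.5]{Hormander} in its lowest-order form, in which the asymptotic sum collapses to the single term
\[
e^{i \omega f(x_0, \phi_0)} \lp \det \lp \frac {\omega f''(x_0, \phi_0)} {2 \pi i} \rp \rp^{- \frac 1 2} u(x_0, \phi_0).
\]
At this order the theorem requires $u \in C^2_0(K)$ and $f \in C^4(X)$, which are precisely the regularity assumptions of the lemma, and it produces a remainder one power of $y$ smaller than the main term; since the main term is of size $y^{-1}$ (recall $\omega = 2 \pi y$ and $n = 2$), this remainder is $O(1/y^2)$, as claimed.

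It remains to put the Gaussian prefactor into the stated shape, which is the only computational point of substance. With $n = 2$ and $\omega = 2 \pi y$ one has $\det(\omega f''(x_0, \phi_0)/2 \pi i) = (y/i)^2 \det f''(x_0, \phi_0) = - y^2 \det f''(x_0, \phi_0)$, so the prefactor equals $y^{-1} (- \det f''(x_0, \phi_0))^{- 1/2}$. Evaluating the branch of this square root through the signature of the real symmetric Hessian $f''(x_0, \phi_0)$ --- equivalently, reading off the phase $e (\sgn f''(x_0, \phi_0)/8)$ from the model integral $\int_{\BR^2} e^{\frac 12 i \omega \langle f'' \xi, \xi \rangle} d \xi$ --- identifies it with $y^{-1} e(1/4)/\sqrt{\det f''(x_0, \phi_0)}$, where $e(1/4) = i$. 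Combining this with $e^{i \omega f(x_0, \phi_0)} = e(y f(x_0, \phi_0))$ and the factor $u(x_0, \phi_0)$ reproduces the leading term of the lemma. I expect this branch-and-signature bookkeeping to be the only delicate point: one must check that H\"ormander's complex normalization $(\det(\,\cdot\,/2 \pi i))^{- 1/2}$ is consistent with the single closed form $e(1/4)/\sqrt{\det f''}$, which forces a sign convention on $\sqrt{\det f''}$ that must be reconciled across the definite and indefinite cases of the Hessian. Everything else is a transcription of the cited theorem.
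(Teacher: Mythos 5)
Your proposal is correct and is essentially the paper's own argument: the paper offers no proof beyond the single remark that the lemma "is a special case of \cite[Theorem 7.7.5]{Hormander}," and your specialization to $n=2$, $k=1$ with $\omega = 2\pi y$, together with the computation $\det(\omega f''/2\pi i)^{-1/2} = y^{-1}(-\det f'')^{-1/2}$, is precisely the transcription left implicit there. Your caveat about reconciling the branch of $\sqrt{\det f''}$ with the signature phase $e(\sgn f''/8)$ is well taken (the single closed form $e(1/4)/\sqrt{\det f''}$ is only unambiguous for an indefinite Hessian, which is the case $\det f'' = -16$ arising in the paper's application), so nothing further is needed.
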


\section{Combinatorial Lemmas and Recurrence Formulae for Classical Bessel Functions}\label{sec: combinatorial}


Let $C_n^r$ denote binomial coefficients. By convention, we let $C_n^r = 0$ if either $r < 0$ or $0\leqslant n < r$. Throughout this section, unless otherwise specified, we   assume that the numbers $k, l, n, r, s ...$ are nonnegative integers.

\subsection{A Combinatorial Inversion Formula}

First, we have the following combinatorial inversion formula. It will be applied to  prove   the inversion of the recurrence formulae in \eqref{2eq: recurrence, I and K} in  Lemma \ref{lem: recurrence, case l = 0} and its generalization in Lemma \ref{lem: recurrence, general case}.
\begin{lem}\label{lem: inversion formula}
	For $  n \geqslant 2 r $, we define $D_n^r = (-)^r \big( C_{n-r}^r + C_{n-r-1}^{r-1} \big)$. Suppose $\left\{ f_n \right\}$ and $\left\{ g_n \right\}$ are two sequences of complex numbers such that
	\begin{align}\label{2eq: rel 1}
	f_n = \sum_{r = 0}^{\left\lfloor n/2 \right \rfloor} C_n^r \shskip  g_{n-2r},
	\end{align}
	then
	\begin{align}\label{2eq: rel 2}
	g_n = \sum_{r = 0}^{\left\lfloor n/2 \right \rfloor} D_n^r \shskip  f_{n-2r}.
	\end{align}
	Conversely, if $\left\{ g_n \right\}$ is constructed from $\left\{ f_n \right\}$ by \eqref{2eq: rel 2}, then \eqref{2eq: rel 1} holds.
\end{lem}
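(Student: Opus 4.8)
The plan is to read the two relations as the statement that the lower-triangular arrays $A = (C_n^r)$ and $D = (D_n^r)$ are mutually inverse, where $A$ sends $(g_n)$ to $f_n = \sum_r C_n^r g_{n-2r}$ and $D$ sends $(f_n)$ to $g_n = \sum_r D_n^r f_{n-2r}$; both transforms are supported on indices $\le n$ of the same parity, and both have unit diagonal since $C_n^0 = D_n^0 = 1$. Each output coordinate depends on only finitely many inputs, so $A$ and $D$ are units in the ring of such matrices with unique two-sided inverses; consequently it suffices to prove the single identity $DA = I$, i.e.
\[
\sum_{r=0}^{j} D_n^r \, C_{n-2r}^{j-r} = \delta_{j,0}, \qquad 0 \le j \le \lfloor n/2\rfloor ,
\]
and then the forward implication \eqref{2eq: rel 1}$\Rightarrow$\eqref{2eq: rel 2} follows by applying $D$, while the ``conversely'' clause follows because $DA = I$ forces $AD = I$ for unitriangular matrices with finite rows (equivalently, one may run a direct induction on $n$).

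First I would record the closed form that unlocks the computation: a one-line manipulation gives $C_{n-r}^r + C_{n-r-1}^{r-1} = \frac{n}{\,n-r\,} C_{n-r}^r$, so that for $n \ge 1$
\[
D_n^r = (-)^r \frac{n}{\,n-r\,}\, C_{n-r}^r .
\]
These are exactly the coefficients in the Girard--Waring (Dickson/Chebyshev $T_n$) expansion of a power sum: if $\alpha,\beta$ are the roots of $t^2 - x t + 1$, then $V_n := \alpha^n + \beta^n = \sum_r D_n^r\, x^{n-2r}$, which I would justify by checking that both sides satisfy $V_n = x V_{n-1} - V_{n-2}$ with $V_0 = 2$, $V_1 = x$ (a routine Pascal-type induction on the $D_n^r$).

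Next I would encode the target identity in a generating variable $w$. Summing over $j$ and reindexing by $s = j-r$,
\[
\sum_{j} \Big(\sum_{r} D_n^r \, C_{n-2r}^{j-r}\Big) w^j = \sum_r D_n^r\, w^r (1+w)^{\,n-2r} =: P_n(w),
\]
so the claim reduces to the polynomial identity $P_n(w) = 1 + w^n$ for $n \ge 1$ (the case $n=0$ giving $P_0 = 1$ directly): indeed the only exponents then occurring are $0$ and $n$, and since $n > \lfloor n/2\rfloor$, within the admissible range $0\le j\le\lfloor n/2\rfloor$ only the constant term survives, which is $\delta_{j,0}$. To evaluate $P_n$, substitute $w = z^2$ and use $1 + z^2 = z\,(z + z^{-1})$; putting $x = z + z^{-1}$, whose associated roots are precisely $z, z^{-1}$, one gets $z^{2r}(1+z^2)^{n-2r} = z^n x^{n-2r}$, hence
\[
P_n(z^2) = z^n \sum_r D_n^r\, x^{n-2r} = z^n V_n = z^n (z^n + z^{-n}) = z^{2n} + 1 ,
\]
so $P_n(w) = 1 + w^n$, which finishes the core identity and the lemma.

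The main obstacle is exactly this evaluation of $P_n(w)$: because the coefficients $C_n^r$ in \eqref{2eq: rel 1} depend on the \emph{upper} index $n$ rather than only on the shift $n-2r$, the transform is not a convolution and the inversion is not a formal reciprocity of power series, so the double sum does not collapse by any direct generating-function bookkeeping. The decisive step is recognizing the $D_n^r$ as power-sum coefficients, which converts the opaque sum into the transparent factorization $1 + z^2 = z(z + z^{-1})$; the remaining ingredients (the binomial closed form, the recurrence for $V_n$, and the unitriangular reduction) are routine.
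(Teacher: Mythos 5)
Your proof is correct, but it takes a genuinely different route from the paper's. Both arguments reduce the lemma to a composition identity for the unitriangular arrays $(C_n^r)$ and $(D_n^r)$, though in opposite orders: you prove $\sum_{r} D_n^r \, C_{n-2r}^{j-r} = \delta_{j,0}$, whereas the paper proves $\sum_{r} C_n^{r} \, D_{n-2r}^{s-r} = \delta_{s,0}$ --- these are interchangeable by the two-sidedness of inverses for row-finite unitriangular matrices, which you correctly invoke (the parity splitting into even and odd index blocks causes no trouble). The real difference lies in how the identity is evaluated. The paper splits $D_{n-2r}^{s-r}$ into its two binomial summands and computes each resulting Vandermonde-type convolution separately, by extracting the coefficients of $X^{s}$ and $X^{s-1}$ from $(1-X)^{n-s}\left(1+X/(1-X)\right)^n=(1-X)^{-s}$ and its companion, and then observes that the two answers $C_{2s-1}^{s}$ and $-C_{2s-1}^{s-1}$ cancel. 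You instead recognize $D_n^r=(-)^r\frac{n}{n-r}C_{n-r}^r$ as the Girard--Waring coefficients expressing the power sum $z^n+z^{-n}$ in terms of $x=z+z^{-1}$, which collapses the entire generating polynomial $P_n(w)=\sum_r D_n^r\, w^r(1+w)^{n-2r}$ to $1+w^n$ via the factorization $1+z^2=z\left(z+z^{-1}\right)$; all the steps you leave as routine (the closed form for $D_n^r$, the recurrence $D_n^r=D_{n-1}^r-D_{n-2}^{r-1}$ verifying the power-sum expansion, and the unitriangular reduction) do check out. Your route is arguably more conceptual --- it explains where the $D_n^r$ come from and yields the stronger statement $P_n(w)=1+w^n$, of which the lemma uses only the coefficients up to degree $\lfloor n/2\rfloor$ --- at the cost of importing the Girard--Waring identity; the paper's argument is more self-contained, needing only coefficient extraction from elementary rational functions, but is less illuminating about why the cancellation occurs.
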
	

\begin{proof}
	The first statement may be easily proven by induction once the following identity is verified
	\begin{equation}\label{2eq: sum C D = delta}
	\sum_{r=0}^s C_{n}^{r} \shskip  D_{n-2r}^{ s-r} = \delta_{s, \shskip 0},  
	\end{equation}
	for $ n \geqslant 2 s $, where $\delta_{s, \shskip 0}$ is the Kronecker symbol that detects $s = 0$. The second statement is simply a matter of uniqueness.
	
	We first prove 
	\begin{align}\label{2eq: sum of C D, 1}
	\sum_{r=0}^s (-)^{s-r} C_{n}^{r} \shskip  C_{n-s-r}^{s-r} = C_{2s-1}^s.
	\end{align}
	For this, we consider the identity
	\begin{align*}
	(1-X)^{n-s} \lp 1 + \frac X {1 - X} \rp^n = \frac 1 {(1 - X)^s}.
	\end{align*}
	The left hand side expands as
	\begin{align*}
	\sum_{r }  C_n^r\shskip  X^r (1-X)^{n-s-r} = \underset{r, \shskip  t}{\sum \sum}  (-)^t C_n^r C_{n-s-r}^t \shskip   X^{t+r},
	\end{align*}
	whereas the right hand side  expands as
	\begin{align*}
	\sum_{p} (-)^p C_{-s}^p \shskip  X^{p} = \sum_{p} C_{s+p-1}^p \shskip  X^{p}.
	\end{align*}
	Then the identity follows immediately from comparing the coefficients of $X^s$. Similarly, on comparing the coefficients of $X^{s-1}$ in the identity
	\begin{align*}
	(1-X)^{n-s-1} \lp 1 + \frac X {1 - X} \rp^n =  \frac {1} {(1-X)^{s+1}},
	\end{align*}
	we find  that 
	\begin{align}\label{2eq: sum of C D, 2}
	\sum_{r=0}^{s-1} (-)^{s-r} C_{n}^{r} \shskip  C_{n-s-r-1}^{s-r-1} = - C_{2s-1}^{s-1}.
	\end{align}
	Summing \eqref{2eq: sum of C D, 1} and \eqref{2eq: sum of C D, 2} yields \eqref{2eq: sum C D = delta}.
\end{proof}

\subsection{A Combinatorial Identity}

The following identity will be crucial for generalizing the inversion of the recurrence formulae in \eqref{2eq: recurrence, I and K}. See Lemma \ref{lem: recurrence, general case}.

\begin{lem}\label{lem: combinatorics 1}
	For $0 \leqslant r \leqslant n - l $, we define $ B_{l,\shskip  n}^r = C_{l+r }^r C_{n-r}^{n-l-r} - C_{l+r -1}^{r-1 } C_{n-r-1}^{n-l-r-1}$. Then
	\begin{align*}
	\sum_{r = 0}^s C_n^r B_{l,\shskip  n-2r}^{s-r} = C_{n}^l C_{n-l}^s,  
	\end{align*}
	for $0\leqslant s \leqslant n - l$.
\end{lem}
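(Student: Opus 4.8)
The plan is to introduce an auxiliary variable $X$, repackage the whole identity as a single generating function, and then evaluate that generating function by a contour integral that collapses completely after one well-chosen substitution.

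\textbf{Step 1: symmetric form and reduction to a coefficient identity.} First I would rewrite $B_{l,n}^r$ in a symmetric shape: since $C_{n-r}^{n-l-r}=C_{n-r}^{l}$, $C_{l+r}^{r}=C_{l+r}^{l}$ and likewise for the shifted indices, one has $B_{l,n}^r=C_{l+r}^l C_{n-r}^l-C_{l+r-1}^l C_{n-r-1}^l$. Writing $[Z^k]$ for the coefficient of $Z^k$ and using $\tfrac1{(1-Z)^{l+1}}=\sum_i C_{l+i}^l Z^i$, $\tfrac1{(1-XZ)^{l+1}}=\sum_j C_{l+j}^l X^j Z^j$, a short Vandermonde-style bookkeeping (the factor $1-XZ^2$ absorbing the second product in $B$ after the shift $t\mapsto t+1$) gives, for each $m$,
\[
\beta_m(X):=\sum_{t\geq 0}B_{l,m}^t\,X^t=[Z^{m-l}]\,\frac{1-XZ^2}{(1-Z)^{l+1}(1-XZ)^{l+1}}.
\]
I would then assemble $\Phi(X):=\sum_{s\geq 0}\bigl(\sum_{r=0}^s C_n^r B_{l,n-2r}^{s-r}\bigr)X^s=\sum_{r\geq 0}C_n^r X^r\beta_{n-2r}(X)$; the substitution $X^r[Z^{n-2r-l}]=[Z^{n-l}]Z^{2r}X^r$ together with $\sum_r C_n^r(XZ^2)^r=(1+XZ^2)^n$ decouples the indices and yields
\[
\Phi(X)=[Z^{n-l}]\,\frac{(1-XZ^2)(1+XZ^2)^n}{(1-Z)^{l+1}(1-XZ)^{l+1}}.\qquad(\star)
\]
As the claimed right-hand side sums to $\sum_s C_n^l C_{n-l}^s X^s=C_n^l(1+X)^{n-l}$, it suffices to prove $\Phi(X)=C_n^l(1+X)^{n-l}$ as polynomials in $X$ and then compare coefficients of $X^s$.

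\textbf{Step 2: the collapsing substitution.} To evaluate $(\star)$ I would fix $X$ generic and small, write the coefficient as the contour integral $\Phi(X)=\frac1{2\pi i}\oint_{|Z|=\epsilon}\frac{(1-XZ^2)(1+XZ^2)^n}{Z^{n-l+1}(1-Z)^{l+1}(1-XZ)^{l+1}}\,dZ$, and substitute $W=Z^{-1}+XZ$. Three identities then trivialize the integrand: first $\frac{(1+XZ^2)^n}{Z^{n-l+1}}=Z^{l-1}W^n$; second $dW=-\frac{1-XZ^2}{Z^2}\,dZ$, so $(1-XZ^2)\,dZ=-Z^2\,dW$; and third, using $XZ^2=WZ-1$, one finds $(1-Z)(1-XZ)=1-(1+X)Z+XZ^2=Z\bigl(W-(1+X)\bigr)$. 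Substituting all three, the integrand becomes simply $-\dfrac{W^n}{\bigl(W-(1+X)\bigr)^{l+1}}\,dW$.

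\textbf{Step 3: residue evaluation.} The map $W=Z^{-1}+XZ$ is injective on $|Z|=\epsilon$ (the second preimage of any value lies near $|Z|=1/(|X|\epsilon)$) and sends $Z\to 0$ to $W\to\infty$, so the positively oriented circle $|Z|=\epsilon$ is carried to a large contour encircling the single finite pole $W=1+X$ exactly once in the clockwise sense. The sign from this orientation cancels the sign in $-W^n/(W-(1+X))^{l+1}$, so by the residue theorem
\[
\Phi(X)=\Res_{W=1+X}\frac{W^n}{\bigl(W-(1+X)\bigr)^{l+1}}=\frac1{l!}\,\frac{d^l}{dW^l}W^n\Big|_{W=1+X}=C_n^l(1+X)^{n-l},
\]
and extracting the coefficient of $X^s$ gives exactly $\sum_{r=0}^s C_n^r B_{l,n-2r}^{s-r}=C_n^l C_{n-l}^s$.

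\textbf{Main obstacle.} The genuine difficulty is $(\star)$. A head-on coefficient extraction is unpleasant because $(1-Z)$ and $(1-XZ)$ occur to the $(l+1)$st power, so partial fractions produce $l$-th order poles, and the hidden cut-off $n-2r\geq l$ inside $\beta_{n-2r}$ must be tracked (this boundary effect is exactly what makes the naive ``closed form'' for $\beta_m$ overcount). The substitution $W=Z^{-1}+XZ$ is the device that removes all of this at once, by fusing the two moving poles $Z=1$ and $Z=1/X$ into the single pole $W=1+X$; the only point needing care in the write-up is the injectivity and orientation of the image contour, which I indicated above.
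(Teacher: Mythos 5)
Your proof is correct. It is built on the same generating function as the paper's, but evaluates it by a genuinely different device. The paper works with the two-variable kernel $P_{l,\,n}(X,Y)=\frac{(1-XY)(1+XY)^n}{((1-X)(1-Y))^{l+1}}$ and reads the left-hand side off as the coefficient of $X^sY^{n-l-s}$; substituting $(X,Y)\mapsto(Z,XZ)$ turns $P_{l,\,n}$ into exactly the rational function in your $(\star)$, and your extraction of $[X^sZ^{n-l}]$ matches the paper's up to the symmetry $B_{l,\,n}^r=B_{l,\,n}^{n-l-r}$. The two arguments then diverge at the evaluation step. The paper stays inside formal power series: it rewrites $P_{l,\,n}=\sum_t C_{l+t}^l\,(X+Y)^t(1-XY)(1+XY)^{n-l-t-1}$ and observes that the degree-$(n-l)$ homogeneous part collapses to $C_n^l(X+Y)^{n-l}$ because $C_{2j-1}^{j}-C_{2j-1}^{j-1}=0$ — purely algebraic, no analytic justification needed. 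You instead pass to a contour integral and substitute $W=Z^{-1}+XZ$, which is the analytic avatar of the paper's trick (under $(X,Y)=(Z,XZ)$ one has $(X+Y)/(1+XY)=(1+X)/W$, so the paper's expansion in powers of $(X+Y)/(1+XY)$ is your expansion at the pole $W=1+X$), and you finish with a single $l$-th order residue. Your route buys a mechanical endgame at the cost of certifying injectivity and orientation of the image contour, which you do correctly. One minor point: the symmetric rewriting $B_{l,\,n}^r=C_{l+r}^lC_{n-r}^l-C_{l+r-1}^lC_{n-r-1}^l$ needs the boundary caveat the paper records after the lemma (at $(l,r)=(0,0)$ the convention gives $C_{l+r-1}^{r-1}=0$ while $C_{-1}^{0}$ is not covered by it); your generating-function bookkeeping handles this automatically, since the coefficient of $X^{-1}$ in a power series is zero, but it deserves a sentence in the write-up.
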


\begin{proof}
	Consider 
	\begin{align*}
	P_{l,\shskip  n} (X, Y) = \frac {(1-XY) (1+XY)^n} {\lp (1-X) (1-Y) \rp^{l+1}}.
	\end{align*}
	First, we expand $P_{l,\shskip  n} (X, Y)$ as below,
	\begin{align*}
	P_{l,\shskip  n} (X, Y)    = \underset{r,\shskip  p,\shskip  q}{\sum \sum \sum }   C_{n}^r  C_{l+p}^p C_{l+q}^q \lp X^{r+p} Y^{r+q} - X^{r+p+1} Y^{r+q+1} \rp,
	\end{align*}
	Hence the left hand side of the identity is exactly the coefficient  of $X^{s} Y^{n-l-s}$ in $P_{l,\shskip  n} (X, Y) $. Second, we write $P_{l,\shskip  n} (X, Y)$ in another way,
	\begin{align*}
	P_{l,\shskip  n} (X, Y) & = \frac {(1-XY) (1+XY)^{n-l-1}} {\lp 1 - (X+Y) / (1+XY) \rp^{l+1}} \\
	& = \sum_{ t } C_{l+t}^l  (X+Y)^{t} (1-XY) (1+ XY)^{n - l - t - 1}.
	\end{align*}
	Thus the degree-$(n-l)$ homogeneous part of $P_{l,\shskip  n} (X, Y)$ is equal to
	\begin{align*}
	C_{n}^l (X+Y)^{n-l} + \sum_{\sstyle t = 0  \atop \sstyle t\shskip  \equiv\shskip  n-l (\mod 2) }^{n-l-2 } & \lp  C_{n-l-t-1}^{\frac 1 2 \lp n-l-t \rp} - C_{n-l-t-1}^{\frac 1 2 \lp n-l-t \rp - 1} \rp (X+Y)^t (XY)^{\frac 1 2 \lp n-l-t \rp} \\
	& \hskip 35 pt =    C_{n}^l (X+Y)^{n-l} = \sum_{ s = 0}^{n-l} C_{n}^l C_{n-l}^s X^{s} Y^{n-l-s}.
	\end{align*}
	The proof is complete by comparing the coefficients of $X^{s} Y^{n-l-s}$.
\end{proof}
We   note that $B_{l,\shskip  n}^r = B_{l,\shskip  n}^{n-l-r}$. Moreover, if $(l, r ) \neq (0, 0), (0, n)$ it would be preferable to write $ B_{l,\shskip  n}^r = C_{l+r }^l C_{n-r}^{l} - C_{l+r -1}^{l} C_{n-r-1}^{l}$. 


\subsection{Recurrence Formulae for Classical Bessel Functions}

\begin{lem}\label{lem: recurrence, case l = 0}
	Let notations be as above. Define   $I_{\nu,\shskip  0} (z) = I_{\nu} (z) $, $K_{\nu,\shskip  0} (z) = K_{\nu} (z)$, and, for $n \geqslant 1$,
	$I_{\nu,\shskip  n} (z) = I_{\nu - n} (z) + I_{\nu + n} (z)$, $ K_{\nu,\shskip  n} (z) = K_{\nu - n} (z) + K_{\nu + n} (z)$.  Then
	\begin{align*} 
	\sum_{r = 0}^{\left\lfloor   n/2 \right\rfloor}   D_n^r \cdot 2^{n-2r} I_{\nu}^{(n-2r)} (z) = I_{\nu,\shskip  n} (z),   \hskip 10 pt \sum_{r = 0}^{\left\lfloor   n/2 \right\rfloor}   D_n^r \cdot (-2)^{n-2r} K_{\nu}^{(n-2r)} (z) = K_{\nu,\shskip  n} (z).
	\end{align*} 
\end{lem}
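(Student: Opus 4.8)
The plan is to recognise the two claimed identities as the combinatorial inversion of the recurrence formulae \eqref{2eq: recurrence, I and K}, and then to invoke Lemma \ref{lem: inversion formula} directly. I treat the $I$ case in detail; the $K$ case is identical with $2$ replaced by $-2$ throughout. Fix $\nu$ and $z$, and set $f_n = 2^n I_{\nu}^{(n)} (z)$ and $g_n = I_{\nu,\, n} (z)$, with the convention $I_{\nu,\, 0} = I_\nu$.

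First I would rewrite the recurrence \eqref{2eq: recurrence, I and K} in the ``halved'' shape \eqref{2eq: rel 1}. Starting from $f_n = 2^n I_{\nu}^{(n)} (z) = \sum_{r=0}^n C_n^r I_{\nu + n - 2r} (z)$, I substitute $j = n - 2r$, so that the summand becomes $C_n^{(n-j)/2} I_{\nu + j} (z)$ with $j$ running over $n, n-2, \ldots, -n$. Using the symmetry $C_n^{(n-j)/2} = C_n^{(n+j)/2}$, the terms with offsets $+j$ and $-j$ (for $j > 0$) carry equal binomial coefficients and combine into $C_n^{(n-j)/2} \bigl( I_{\nu + j} (z) + I_{\nu - j} (z) \bigr) = C_n^{(n-j)/2} I_{\nu,\, j} (z)$; when $n$ is even, the single central term $j = 0$ has coefficient $C_n^{n/2}$ and contributes $C_n^{n/2} I_{\nu,\, 0} (z) = C_n^{n/2} I_\nu (z)$, consistently with the convention $I_{\nu,\, 0} = I_\nu$. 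Collecting these pairs shows precisely
\[
f_n = \sum_{r = 0}^{\left\lfloor n/2 \right\rfloor} C_n^r \, g_{n-2r},
\]
which is relation \eqref{2eq: rel 1}.

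Once the recurrence is cast in this form, Lemma \ref{lem: inversion formula} immediately yields $g_n = \sum_{r=0}^{\lfloor n/2 \rfloor} D_n^r f_{n-2r}$, that is,
\[
I_{\nu,\, n} (z) = \sum_{r = 0}^{\left\lfloor n/2 \right\rfloor} D_n^r \cdot 2^{n-2r} I_{\nu}^{(n-2r)} (z),
\]
which is the first identity. Repeating the argument verbatim with $f_n = (-2)^n K_{\nu}^{(n)} (z)$, $g_n = K_{\nu,\, n} (z)$, and the second recurrence in \eqref{2eq: recurrence, I and K} produces the companion $K$ identity.

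The only point requiring care, and hence the main obstacle, is the re-indexing in the first step: one must verify that the pairing of the $\pm j$ offsets respects the binomial symmetry $C_n^r = C_n^{n-r}$, and that the diagonal term $j = 0$ (present exactly when $n$ is even) matches the convention $I_{\nu,\, 0} = I_\nu$ rather than $2 I_\nu$. Once this bookkeeping is settled, the lemma is a purely formal consequence of the inversion formula, requiring no further analytic input.
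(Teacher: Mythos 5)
Your proof is correct and follows the same route as the paper: recast the recurrence \eqref{2eq: recurrence, I and K} in the halved form \eqref{2eq: rel 1} with $f_n = (\pm 2)^n$ times the $n$-th derivative and $g_n = I_{\nu,\,n}$ or $K_{\nu,\,n}$, then apply Lemma \ref{lem: inversion formula}. The paper states the reformulation without spelling out the $\pm j$ pairing and the central-term convention, so your extra bookkeeping only makes explicit what the paper leaves implicit.
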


\begin{proof}
	Note that we may reformulate \eqref{2eq: recurrence, I and K} as
	\begin{align}
	\label{3eq: recurrence 1, l=0} 2^{n} I_{\nu}^{(n)} (z) = \sum_{r = 0}^{\left\lfloor   n/2 \right\rfloor} C^{r}_{n} I_{\nu, \shskip  n - 2r} (z), & \hskip 10 pt (-2)^{n} K_{\nu}^{(n)} (z) =   \sum_{r = 0}^{\left\lfloor   n/2 \right\rfloor} C^{r}_{n} K_{\nu, \shskip  n - 2r} (z).
	\end{align}
	This lemma is therefore a directly consequence of Lemma \ref{lem: inversion formula}.	
\end{proof}

With the help of Lemma \ref{lem: combinatorics 1}, we generalize the first formula in Lemma \ref{lem: recurrence, case l = 0} as follows. 

\begin{lem}\label{lem: recurrence, general case}
	Let notations be as above. Suppose that $ l \leqslant n $. We have
	\begin{align*}
	\sum_{r = 0}^{\left\lfloor   (n-l)/2 \right\rfloor}  D_{n}^r \cdot 2^{n-2r} C_{n-2r}^l \shskip 
	I_{ \nu }^{(n - l - 2r )} (z)  =  2^l \sum_{s=0}^{n-l} B_{l,\shskip  n}^s I_{\nu - n+l + 2 s} (z).
	\end{align*}
	
\end{lem}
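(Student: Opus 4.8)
The plan is to expand both sides into the ``basis'' of shifted modified Bessel functions $I_{\nu+j}(z)$ and to match coefficients, reducing the whole identity to the purely combinatorial content of Lemma~\ref{lem: combinatorics 1}. First I would apply the recurrence \eqref{2eq: recurrence, I and K} to each derivative appearing on the left: writing $2^{\,n-l-2r} I_{\nu}^{(n-l-2r)}(z) = \sum_{j=0}^{n-l-2r} C_{n-l-2r}^{j} I_{\nu+n-l-2r-2j}(z)$ and pulling out a factor $2^l$, the left-hand side becomes $2^l \sum_{r,\,j} D_n^r\, C_{n-2r}^l\, C_{n-l-2r}^{j}\, I_{\nu+n-l-2r-2j}(z)$. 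Grouping the terms with $p := r+j$ fixed, the coefficient of $I_{\nu+n-l-2p}(z)$ is $\sum_{r} D_n^r\, C_{n-2r}^l\, C_{n-l-2r}^{p-r}$. Since the right-hand side is already displayed in the $I_{\nu+j}$ form, no linear independence of the $I_{\nu+j}$ is needed: it suffices to check that this explicit expansion of the left-hand side agrees with the right-hand side term by term.

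The second, decisive step is to identify the coefficient $\sum_{r} D_n^r\, C_{n-2r}^l\, C_{n-l-2r}^{p-r}$ with $B_{l,\,n}^{p}$. To this end I would read Lemma~\ref{lem: combinatorics 1} in the form $\gamma_n(s) = \sum_{r} C_n^r\, \beta_{n-2r}(s-r)$, where $\gamma_n(s) := C_n^l C_{n-l}^s$ and $\beta_n(s) := B_{l,\,n}^s$. This is precisely the ``forward'' half of the inversion of Lemma~\ref{lem: inversion formula}, except that the shift now acts simultaneously on the subscript and on the upper index. I would therefore record a bivariate version of that inversion: if $\gamma_n(s) = \sum_r C_n^r\, \beta_{n-2r}(s-r)$ for all $n,s$, then $\beta_n(s) = \sum_r D_n^r\, \gamma_{n-2r}(s-r)$. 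This follows by substituting one sum into the other and collecting by $t = r+r'$, whereupon the inner sum becomes $\sum_r D_n^r C_{n-2r}^{t-r} = \delta_{t,\,0}$ — the $DC = \mathrm{id}$ orthogonality underlying the forward direction of Lemma~\ref{lem: inversion formula} (equivalent, for these lower-triangular transforms, to \eqref{2eq: sum C D = delta}). Applying this inverted form to $\gamma_n(s) = C_n^l C_{n-l}^s$ gives exactly $B_{l,\,n}^{s} = \sum_r D_n^r\, C_{n-2r}^l\, C_{n-l-2r}^{s-r}$, which is the coefficient computed in the first step with $s = p$.

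It remains to match this against the right-hand side. Its general term $I_{\nu-n+l+2s}(z)$ coincides with $I_{\nu+n-l-2p}(z)$ exactly when $s = n-l-p$, so the right-hand coefficient of $I_{\nu+n-l-2p}(z)$ is $B_{l,\,n}^{\,n-l-p}$; by the symmetry $B_{l,\,n}^{r} = B_{l,\,n}^{\,n-l-r}$ noted after Lemma~\ref{lem: combinatorics 1}, this equals $B_{l,\,n}^{p}$, agreeing with the left-hand coefficient. Tracking the admissible ranges of $r$ (dictated by $D_n^r$, $C_{n-2r}^l$ and $C_{n-l-2r}^{p-r}$) shows they coincide with the summation range $0 \leqslant r \leqslant \lfloor (n-l)/2\rfloor$ in the statement, so the two expansions are literally identical.

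The main obstacle is the second step: recognizing Lemma~\ref{lem: combinatorics 1} as the forward half of an inversion pair and verifying that the inversion of Lemma~\ref{lem: inversion formula} survives the passage from univariate sequences to the coupled-shift bivariate setting. Once the orthogonality $\sum_r D_n^r C_{n-2r}^{t-r} = \delta_{t,\,0}$ is invoked in the correct order, the rest is routine bookkeeping of binomial indices together with the evenness symmetry of $B_{l,\,n}^{r}$.
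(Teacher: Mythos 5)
Your proof is correct, and it rests on the same two combinatorial inputs as the paper's proof — Lemma \ref{lem: combinatorics 1} and the $C$--$D$ orthogonality behind Lemma \ref{lem: inversion formula} — but it organizes them differently. The paper sets $f_{l,\,n} = 2^{n} C_{n}^{l} I_{\nu}^{(n-l)}(z)$ and the candidate $g_{l,\,n} = 2^{l}\sum_{s} B_{l,\,n}^{s} I_{\nu,\,n-l-2s}(z)$, verifies the \emph{forward} relation $f_{l,\,n} = \sum_{r} C_{n}^{r}\, g_{l,\,n-2r}$ by a direct computation using Lemma \ref{lem: combinatorics 1} in the direction in which it is stated, and then applies Lemma \ref{lem: inversion formula} to these function-valued sequences. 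You instead expand both sides over the individual $I_{\nu+j}(z)$ and match coefficients, which forces you to \emph{invert} Lemma \ref{lem: combinatorics 1}; for that you need the transposed orthogonality $\sum_{r} D_{n}^{r} C_{n-2r}^{t-r} = \delta_{t,\,0}$, which is not literally \eqref{2eq: sum C D = delta} but, as you correctly observe, follows from it because both transforms are unitriangular, and your bivariate substitution argument then goes through without needing any uniqueness step. You are also right that no linear independence of the $I_{\nu+j}$ is required, and the final use of the symmetry $B_{l,\,n}^{r} = B_{l,\,n}^{\,n-l-r}$ mirrors what the paper uses implicitly to pass between the folded sum $\sum_{s} B_{l,\,n}^{s} I_{\nu,\,n-l-2s}$ and the unfolded sum in the statement. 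The paper's route is marginally leaner since it only ever uses Lemma \ref{lem: combinatorics 1} in its proved direction; yours costs one extra (but sound) inversion step and buys the explicit closed form $B_{l,\,n}^{s} = \sum_{r} D_{n}^{r}\, C_{n-2r}^{l}\, C_{n-l-2r}^{s-r}$ as a by-product.
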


\begin{proof}
	In the notations of Lemma \ref{lem: inversion formula}, we let 
	\begin{equation*}
	f_{\shskip  l,\shskip  n} = \left\{  \begin{split}
	& 2^{n } C_{n }^l \shskip 
	I_{ \nu }^{(n - l  )} (z), \hskip 10 pt \text{ if } n \geqslant l, \\
	& 0, \hskip 61 pt \text{ if }   n < l,
	\end{split} \right.
	\end{equation*}
	and we only need to prove 
	\begin{equation*}
	g_{l,\shskip  n} = \left\{  \begin{split}
	& 2^l \sum_{s=0}^{\left\lfloor (n-l)/2 \right\rfloor} B_{l,\shskip  n}^s I_{\nu,\shskip  n-l - 2 s} (z), \hskip 10 pt \text{ if } n \geqslant l, \\
	& 0, \hskip 107 pt \text{ if }   n < l.
	\end{split} \right.  
	\end{equation*} 
	By Lemma \ref{lem: combinatorics 1} and \eqref{3eq: recurrence 1, l=0},
	\begin{align*}
	& \hskip 11 pt \sum_{r = 0}^{\left\lfloor (n-l) /2 \right \rfloor} C_n^r \cdot 2^l \sum_{s=0}^{\left\lfloor (n-l - 2 r)/2 \right\rfloor} B_{l,\shskip  n-2r}^s \shskip  I_{\nu,\shskip  n- l - 2 r - 2 s} (z) \\
	& =  2^l \sum_{s=0}^{\left\lfloor (n-l)/2 \right\rfloor}  I_{\nu,\shskip  n-l - 2 s} (z) \sum_{r = s}^{n-l} C_n^r  B_{l,\shskip  n-2r}^{s-r} \\
	& = 2^l C_{n}^l \sum_{s=0}^{\left\lfloor (n-l)/2 \right\rfloor}   C_{n-l}^s \shskip  I_{\nu,\shskip  n-l - 2 s} (z) \\
	& = 2^{n } C_{n }^l \shskip 
	I_{ \nu }^{(n - l  )} (z).
	\end{align*}
	This verifies the nontrivial case of the identity \eqref{2eq: rel 1} in Lemma \ref{lem: inversion formula} for $f_{l,\shskip  n}$ and $g_{l,\shskip  n}$. Therefore, our proof is done with an application of Lemma \ref{lem: inversion formula}.
\end{proof}

A similar recurrence formula holds for $K_{\nu} (z)$, or other types of Bessel functions, but only the formula for $I_{\nu} (z)$ will be needed in the sequel.

\subsection{Additional Combinatorial Identities} In the following, we collect some  combinatorial results that will be used in the proof of  Theorem \ref{thm: integral of J} (or, indeed Proposition \ref{prop: recursive S(a, v)}).

\begin{lem}\label{lem: combinatorics 2}
	Let $B_{l,\shskip  n}^r$ be defined as in Lemma {\rm \ref{lem: combinatorics 1}}. For $0 \leqslant r \leqslant k-l$, we define $A_{k,\shskip  l}^r =    C_{l+r-1}^{r-1} C_{2k-l-1}^{k-l-r-1} +  C_{l+r }^r C_{2k-l-1}^{k-l-r } $. Then
	\begin{align*}
	\sum_{n=l+r}^k (-)^n C_{2k}^{k-n} B_{l,\shskip  n}^r = (-)^{l+r} A_{k,\shskip  l}^r.
	\end{align*}
\end{lem}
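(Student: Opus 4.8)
The plan is to follow the generating-function method already used in the proofs of Lemmas \ref{lem: inversion formula} and \ref{lem: combinatorics 1}. First I would split $B_{l,\,n}^r$ into its two summands and pull out the factors $C_{l+r}^r$ and $C_{l+r-1}^{r-1}$, which do not depend on $n$. Writing $C_{n-r}^{n-l-r} = C_{n-r}^{l}$ and $C_{n-r-1}^{n-l-r-1} = C_{n-r-1}^{l}$, the left-hand side becomes
\[
C_{l+r}^r \sum_{n=l+r}^{k} (-)^n C_{2k}^{k-n} C_{n-r}^{l} \; - \; C_{l+r-1}^{r-1} \sum_{n=l+r}^{k} (-)^n C_{2k}^{k-n} C_{n-r-1}^{l}.
\]
Comparing with the target $(-)^{l+r} A_{k,\,l}^r = (-)^{l+r}\big( C_{l+r-1}^{r-1} C_{2k-l-1}^{k-l-r-1} + C_{l+r}^r C_{2k-l-1}^{k-l-r} \big)$, it suffices to evaluate the two inner sums separately; I expect
\[
\sum_{n=l+r}^{k} (-)^n C_{2k}^{k-n} C_{n-r}^{l} = (-)^{l+r} C_{2k-l-1}^{k-l-r}, \qquad
\sum_{n=l+r}^{k} (-)^n C_{2k}^{k-n} C_{n-r-1}^{l} = (-)^{l+r+1} C_{2k-l-1}^{k-l-r-1}.
\]

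Both sums reduce to a single master identity. Substituting $n = l + r + j$ in the first sum turns it into $(-)^{l+r}\sum_{j=0}^{k-l-r} (-)^j C_{2k}^{(k-l-r)-j} C_{l+j}^{j}$, while in the second sum the same substitution followed by the shift $j \mapsto j+1$ (the $j = 0$ term vanishes because $C_{l-1}^{l} = 0$) yields $(-)^{l+r+1}\sum_{j=0}^{k-l-r-1} (-)^j C_{2k}^{(k-l-r-1)-j} C_{l+j}^{j}$. Hence everything follows once I establish, for every integer $M \geqslant 0$,
\[
\sum_{j=0}^{M} (-)^j C_{2k}^{M-j} C_{l+j}^{j} = C_{2k-l-1}^{M}.
\]
This I would prove by reading off the coefficient of $X^M$ in the product of the two generating series $\sum_i C_{2k}^{i} X^i = (1+X)^{2k}$ and $\sum_j (-)^j C_{l+j}^{j} X^j = (1+X)^{-(l+1)}$, whose product is $(1+X)^{2k-l-1} = \sum_M C_{2k-l-1}^M X^M$; the Cauchy product of the two series is exactly the left-hand side.

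Assembling the two evaluations gives
\[
C_{l+r}^r (-)^{l+r} C_{2k-l-1}^{k-l-r} - C_{l+r-1}^{r-1} (-)^{l+r+1} C_{2k-l-1}^{k-l-r-1} = (-)^{l+r} A_{k,\,l}^r,
\]
which is the desired identity. There is no serious obstacle here; the only points demanding care are the index bookkeeping in the second sum, where the apparent first term is zero and a reindexing is needed to match the master identity, and keeping track of the negative-exponent expansion, which rests on the sign convention $C_{l+j}^{j} = (-)^j C_{-(l+1)}^{j}$ already exploited in the proof of Lemma \ref{lem: inversion formula}.
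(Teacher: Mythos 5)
Your proof is correct and follows essentially the same route as the paper: both reduce the lemma to the single alternating convolution $\sum_{n=l+r}^{k}(-)^n C_{2k}^{k-n}C_{n-r}^{\,l}=(-)^{l+r}C_{2k-l-1}^{k-l-r}$ (applied at $r$ and at $r+1$, where the first term of the second sum vanishes) and establish it by comparing coefficients in a generating-function identity. The only difference is cosmetic: the paper reads off the coefficient of $X^{k-l-r}$ in $((1+X)-X)^{2k}/(1+X)^{k+r}=1/(1+X)^{k+r}$, whereas you use the Vandermonde product $(1+X)^{2k}(1+X)^{-(l+1)}=(1+X)^{2k-l-1}$.
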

\begin{proof}
	In view of the definition of $B_{l,\shskip  n}^r$, it suffices to prove the identity
	\begin{align*}
	\sum_{n=l+r}^k (-)^n C_{2k}^{k-n} C_{n-r}^{n-l-r} = (-)^{l+r} C_{2k-l-1}^{k-l-r}.
	\end{align*}
	This follows easily from examining the coefficients of  $X^{k-l-r }$ in the identity
	\begin{align*}
	\frac {( (1+X) - X)^{2k} } {(1 + X)^{k+r}} = \frac { 1 } {(1+X)^{k+r}}.
	\end{align*}	
\end{proof}
By our conventions on binomial coefficients, $A_{k,\shskip  l}^{r}$ is well defined for all values of $k, l$ and $r$, but it vanishes except for  $0 \leqslant r \leqslant k - l $.
Moreover, when  $(l, r) \neq (0, 0)$, we shall always write $A_{k,\shskip  l}^r =    C_{l+r-1}^{l} C_{2k-l-1}^{k-l-r-1} +  C_{l+r }^l C_{2k-l-1}^{k-l-r } $.

The following lemma includes some formulae for $A_{k,\shskip  l}^r$ and is designed for the concluding part of the proof of Theorem \ref{thm: integral of J} (Proposition \ref{prop: recursive S(a, v)}). These seemingly complicated formulae may be verified straightforwardly with the help of Mathematica. 

\begin{lem}\label{lem: combinatorics 3}
	Suppose that $k \geqslant 1$.
	
	{\rm (1).} We have
	\begin{equation}\label{3eq: Ak0}
	\begin{split}
	& A_{k+1, \shskip 0}^0 = 2 A_{k , \shskip 0}^0 +  A_{k , \shskip 0}^1, \hskip 10 pt A_{k+1, \shskip 0}^1 = 2 A_{k , \shskip 0}^0 + 2 A_{k , \shskip 0}^1 +    A_{k , \shskip 0}^2, \\
	& A_{k+1, \shskip 0}^r =   A_{k , \shskip 0}^{r - 1} + 2 A_{k , \shskip 0}^{r } + A_{k , \shskip 0}^{r+1}  ,  \hskip 10 pt r \geqslant 2,
	\end{split}
	\end{equation}
	
	{\rm (2).} We have
	\begin{align}\label{3eq: Ak1}
	A_{k+1, \shskip 1}^0 = (k+1) \big( 2 A_{k , \shskip 0}^0 - A_{k , \shskip 0}^{1} \big), \hskip 10 pt A_{k+1, \shskip 1}^r = (k+1) \big(  A_{k , \shskip 0}^r - A_{k , \shskip 0}^{r+1} \big) ,  \hskip 10 pt r \geqslant 1.
	\end{align}
	\begin{equation}\label{3eq: Ak1, 2}
	\begin{split}
	&A_{k+1, \shskip 1}^0 - \lp 2 A_{k , \shskip 0}^0 - A_{k , \shskip 0}^1 + A_{k , \shskip 1}^0 + A_{k , \shskip  1}^1 \rp = 0, \\
	& 	A_{k+1, \shskip 1}^r   -   \big(       A_{k, \shskip 0}^r -   A_{k, \shskip 0}^{r+1}  + A_{k, \shskip 1}^{r-1} + 2 A_{k, \shskip 1}^{r } + A_{k, \shskip 1}^{r+1} \big)  =  0, \hskip 10 pt r \geqslant 1.
	\end{split} 
	\end{equation}
	
	{\rm (3).} For  $l \geqslant 2$ and $k-l+1 \geqslant r \geqslant - 1$, we have
	\begin{equation}\label{3eq: Akl, 0}
	\begin{split}
	(k-1) (l-1) l   A^{r }_{k+1,\shskip  l} \ - \ &  (k-1)  (k+1)  (k-l+2) A^{r+1 }_{k ,\shskip  l-2} \\
	= - \ & (k+1) (2k-l) (2k-l+1) A^{r+1 }_{k-1 ,\shskip  l-2},
	\end{split}
	\end{equation}
	\begin{align}\label{3eq: Akl, 1}
	l A^{r }_{k+1,\shskip  l} - (k - l+2) \big(    A_{k,\shskip  l-2}^{r+1} + A_{k,\shskip  l-1}^{r } -  A_{k,\shskip  l-1}^{r+1} \big) = - (2k-l) A_{k-1,\shskip  l-2}^{r+1},
	\end{align}
	\begin{equation}\label{3eq: Akl, 2}
	\begin{split}
	A_{k+1,\shskip  l}^r   -   \big(       A_{k,\shskip  l-1}^r -   A_{k,\shskip  l-1}^{r+1}  + A_{k,\shskip  l}^{r-1} + 2 A_{k,\shskip  l}^{r } + A_{k,\shskip  l}^{r+1} \big)  = A_{k-1,\shskip  l-2}^{r+1} .
	\end{split} 
	\end{equation}
\end{lem}

\begin{proof} 
	\
	
	(1).  Note that
	\begin{align*}
	A_{k, \shskip 0}^0 = C_{2k-1}^{k } = \tfrac 1 2 C_{2k }^{k }, \hskip 10 pt A_{k, \shskip 0}^r = C_{2k-1}^{k-r} + C_{2k-1}^{k-r-1} = C_{2k}^{k-r},  \hskip 10 pt k, r \geqslant 1.
	\end{align*} 
	By Pascal's rule, 
	\begin{align*}
	C_{2k+2}^{k-r+1} = C_{2k }^{k-r-1} + 2 C_{2k }^{k-r } + C_{2k }^{k-r+1},
	\end{align*}
	which implies  \eqref{3eq: Ak0}.
	
	(2). We have
	\begin{align*}
	A_{k, \shskip 1}^r = r \shskip  C_{2k-2}^{k-r-2} + (r+1) C_{2k-2}^{k-r-1} .
	\end{align*}
	The simple identity below yields \eqref{3eq: Ak1},
	\begin{align*}
	r \shskip  C_{2k }^{k-r-1} + (r+1) C_{2k }^{k-r }   = 
	(k+1) \lp C_{2k }^{k-r } - C_{2k  }^{k - r - 1} \rp.
	\end{align*}
	Again, \eqref{3eq: Ak1, 2} follows from Pascal's rule.
	
	(3). Let us assume $r \geqslant 0$. The degenerated case $r = -1$ is much simpler. 
	
	The identity \eqref{3eq: Akl, 0} is equivalent to the vanishing of 
	\begin{align*}
	& \hskip 12 pt (k - 1) (r + 1) (l + r -  1) \big( r (k - l - r + 1) + (l + r) (k + r + 1) \big) \\
	&- (k - 
	1) (k + 1)  (k - l +  2) \big( (r + 1) (k - l - r + 1)  + (l + r - 1) (k + r + 1) \big) \\
	& + (k + 
	1) (k + r + 1) (k - l - r + 
	1)  \big( (r + 1) (k - l - r )  + (l + r - 1) (k + r) \big),
	\end{align*}
	which may be checked directly. As for \eqref{3eq: Akl, 1},  we verify by Pascal's rule that
	\begin{align}\label{3eq: aux, 1}
	A_{k,\shskip  l-2}^{r+1} + A_{k,\shskip  l-1}^{r } -  A_{k,\shskip  l-1}^{r+1} = 
	C_{l+r }^{l-1} C_{2k-l}^{k-l-r+1} - C_{l+r -2}^{l-1} C_{2k-l}^{k-l-r -1},
	\end{align}
	\begin{align}\label{3eq: aux, 2}
	A^{r }_{k+1,\shskip  l} = C_{l+r-1}^l C_{2k-l}^{k-l-r -1} + \lp C_{l+r-1}^l + C_{l+r }^l \rp C_{2k-l}^{k-l-r} + C_{l+r }^l C_{2k-l}^{k-l-r+1}.
	\end{align} 
	Then,
	\begin{align*}
	& \hskip 13 pt l A^{r }_{k+1,\shskip  l} - (k - l+2) \big(    A_{k,\shskip  l-2}^{r+1} + A_{k,\shskip  l-1}^{r } -  A_{k,\shskip  l-1}^{r+1} \big) \\
	& = \big(  l C_{l+r-1}^l + (k-l+2) C_{l+r-2}^{l-1} \big) C_{2k-l}^{k-l-r-1} + l \big(  C_{l+r-1}^l + C_{l+r }^l \big) C_{2k-l}^{k-l-r} \\
	& \hskip 12 pt + \big(  l C_{l+r}^l - (k-l+2) C_{l+r}^{l-1} \big) C_{2k-l}^{k-l-r+1} \\
	& =  \hskip - 1 pt (k + r+1) C_{l+r-2}^{l-1}   C_{2k-l}^{k-l-r-1} \hskip -2 pt + \hskip - 1 pt l \big( C_{l+r-1}^l + C_{l+r }^l \big) C_{2k-l}^{k-l-r} \hskip -2 pt -  \hskip -1 pt (k-l-r+1) C_{l+r}^{l-1} C_{2k-l}^{k-l-r+1} \\
	& = \big( (k-l-r) C_{l+r-2}^{l-1} + l \big(  C_{l+r-1}^l + C_{l+r }^l \big) - (k+r) C_{l+r}^{l-1} \big) C_{2k-l}^{k-l-r } \\
	& = - (k-1) \big(  C_{l+r-2}^{l-2} + C_{l+r-1}^{l-2} \big) C_{2k-l}^{k-l-r }.
	\end{align*}  
	Therefore, we are left to show
	\begin{align*}
	(k-1) \big(  C_{l+r-2}^{l-2} + C_{l+r-1}^{l-2} \big) C_{2k-l}^{k-l-r } = (2k-l) \big(  C_{l+r-2}^{l-2} C_{2k-l-1}^{k-l-r -1} + C_{l+r-1}^{l-2} C_{2k-l-1}^{k-l-r } \big).
	\end{align*}
	It is reduced to the following identity, which may be verified directly,
	\begin{align*}
	(k-1) \big( (r+1) + (l+r-1) \big) = (r+1) (k-l-r) + (l+r-1) (k+r).
	\end{align*}
	Finally, we verify \eqref{3eq: Akl, 2}. Similar to \eqref{3eq: aux, 1} and \eqref{3eq: aux, 2}, we have
	\begin{equation}
	\begin{split}
	A_{k ,\shskip  l-1}^{r} - A_{k ,\shskip  l-1}^{r+1} + A_{k ,\shskip  l}^{r-1} - 2 A_{k ,\shskip  l}^{r} + A_{k ,\shskip  l}^{r+1} = & \  C_{l+r -1}^{l } C_{2k-l-1}^{k-l-r -2}  - C_{l+r -2}^{l } C_{2k-l-1}^{k-l-r -1} \\
	& - C_{l+r+1 }^{l } C_{2k-l-1}^{k-l-r } + C_{l+r }^{l } C_{2k-l-1}^{k-l-r+1},
	\end{split}
	\end{equation}
	\begin{equation}
	\begin{split}
	A_{k+1 ,\shskip  l }^{r} - 4 	A_{k  ,\shskip  l }^{r} =   C_{l+r-1}^l C_{2k-l-1}^{k-l-r -2} + & \lp   C_{l+r }^l \hskip -2 pt - 2 C_{l+r-1}^l \rp C_{2k-l-1}^{k-l-r-1}   \\
	+  & \lp C_{l+r-1}^l \hskip -2 pt - 2 C_{l+r }^l \rp C_{2k-l-1}^{k-l-r} + C_{l+r }^l C_{2k-l-1}^{k-l-r+1}  .
	\end{split}
	\end{equation}
	Hence 
	\begin{align*}
	& \hskip 14 pt A_{k+1,\shskip  l}^r   -   \big(       A_{k,\shskip  l-1}^r -   A_{k,\shskip  l-1}^{r+1}  + A_{k,\shskip  l}^{r-1} + 2 A_{k,\shskip  l}^{r } + A_{k,\shskip  l}^{r+1} \big) \\
	& =  \lp C_{l+r -2}^{l } - 2 C_{l+r-1}^l + C_{l+r }^l \rp C_{2k-l-1}^{k-l-r-1} + \lp C_{l+r-1}^l - 2 C_{l+r }^l + C_{l+r+1 }^{l } \rp C_{2k-l-1}^{k-l-r} \\
	& = C_{l+r -2}^{l-2}  C_{2k-l-1}^{k-l-r-1} + C_{l+r -1}^{l-2} C_{2k-l-1}^{k-l-r} \\
	& = A_{k-1,\shskip  l-2}^{r+1}.
	\end{align*}
\end{proof}

\section{Proof of Theorem \ref{thm: integral of J}}

For brevity, we put $m = 2 k$. Without loss of generality, we assume that $ k \geqslant 0$, $|\Re \mu | < \frac 1 8$ and $\mu \neq 0$. The admissible range of  $\mu$ for \eqref{0eq: int of J = IK} in Theorem \ref{thm: integral of J} may be extended to $ |\Re \mu | < \frac 1 2 $ by the principal of analytic continuation.
	
\subsection{First Reductions}\label{sec: reduction}

The first step is to reduce the integral formula to a combinatorial identity which involves the derivatives of functions of the form $I_n (a \varv) \varv^{k} (1-\varv)^{k - 1}$ so that the combinatorial theory for  the derivatives of the $I$-Bessel function developed in \S \ref{sec: combinatorial} will then come in to play. 

We insert the integral representation of $\bfJ_{\mu,\shskip  2 k} \lp x e^{  i \phi} \rp$ in \eqref{2eq: integral repn of J mu m} and change the order of integration, then the integral on the left of \eqref{0eq: int of J = IK} turns into
\begin{align*}
4 \pi (-1)^k  \int_0^{2 \pi} \int_0^\infty  y^{4 \mu - 1} E \big(y e^{\frac 1 2 i \phi} \big)^{-2k}  \int_0^\infty  J_{2k} \big(  4 \pi \sqrt x Y \big(y e^{\frac 1 2 i \phi} \big) \big) \exp (- 2 \pi c x)  d x d y d\phi.
\end{align*}
Note that the triple integral is absolutely convergent as $|\Re \mu | < \frac 1 8$. We evaluate the inner integral using the formula \eqref{2eq: integral exp}, then the triple integral is equal to
\begin{equation*}
\begin{split}
\frac {  2 (- 2 \pi)^{k}  \Gamma \lp k + 1 \rp} {  \Gamma (2 k + 1) c^{ k + 1} }  \int_0^{2 \pi}  \int_0^\infty &  y^{4 \mu - 1}  \big(  y e^{- \frac 1 2 i \phi} + y\- e^{\frac 1 2 i \phi} \big)^{2k}  \\
& M  \lp k   + 1; 2k+1; -  \frac {2 \pi \lp y^2 + y^{-2} + 2 \cos \phi \rp} c \rp d y d \phi.
\end{split}
\end{equation*}

When $k = 0$,  Weber's confluent hypergeometric function reduces to the exponential function as in \eqref{2eq: M(a; a)}, so the double integral splits into a product of two integrals and they may be evaluated by \eqref{2eq: integral repn of K} and \eqref{2eq: integral repn of I} respectively. Consequently, we obtain
\begin{align}
\frac { 4 \pi } {  \hskip 1 pt  c \hskip 1 pt } \hskip 1 pt  K_{2 \mu} \lp \frac {4 \pi} {c} \rp I_0 \lp \frac {4 \pi  } {c} \rp.
\end{align}
then follows  the formula \eqref{0eq: int of J = IK} in Theorem \ref{thm: integral of J} in the case $k = 0$.

When $k \geqslant 1$, we apply the integral representation of Weber's  confluent hypergeometric function in \eqref{2eq: integral repn of M(a; b)},  expand $\big( y e^{- \frac 1 2 i \phi} + y\- e^{\frac 1 2 i \phi} \big)^{2k}$, change the order of integration, and again evaluate the integrals over $y$ and $\phi$ by \eqref{2eq: integral repn of K} and \eqref{2eq: integral repn of I} respectively. It follows that the integral above turns into
\begin{align*}
\frac { 4 \pi (- 2 \pi)^{k  }  } {    \lp k - 1 \rp ! c^{ k + 1} } \sum_{n = - k}^{k} (-)^n  C_{2k}^{k - n}  \int_0^1  K_{2 \mu +  n }    \lp \frac {4 \pi  \varv } c \rp I_{ n }   \lp \frac {4 \pi  \varv } c \rp    \varv^{k} (1-\varv)^{k - 1} d \varv.
\end{align*}
By Lemma \ref{lem: recurrence, case l = 0}, this is further equal to
\begin{align*}
\frac { 4 \pi (- 2 \pi)^{k  }  } {    \lp k - 1 \rp ! c^{ k + 1} } \sum_{n = 0}^{k}     C_{2k}^{k - n}   \sum_{r = 0}^{\left\lfloor   n/2 \right\rfloor}  2^{n-2r} D_n^r    \int_0^1 K_{2 \mu}^{(n-2r)}   \lp \frac {4 \pi  \varv } c \rp  I_{ n }   \lp \frac {4 \pi  \varv } c \rp  \varv^{k} (1-\varv)^{k - 1} d \varv.
\end{align*}
We now perform integration by parts. Since $|\Re \mu | < \frac 1 8$ and $\mu \neq 0$, in view of the series expansions of  $K_{2 \mu}$ and $I_n$  at zero (see (\ref{2def: series expansion of J}, \ref{2eq: I, K and J})), all the boundary terms at  $\varv = 0$ vanish. Moreover, we observe that $k - 1$ many differentiations are required to remove the zero of $(1 - \varv)^{k-1}$ at $\varv = 1$, so all the boundary terms  at $\varv = 1$ are zero except for one, which is
\begin{align}
 \frac { 4 \pi (-1)^k} {    c   }  K_{2 \mu} \lp \frac {4 \pi} {c} \rp   I_k \lp \frac {4 \pi  } {c} \rp.
\end{align}
On the other hand, the resulting integral after  integration by parts is
\begin{align}
\frac { 4 \pi (- 2 \pi)^{k  }  } {    \lp k - 1 \rp ! c^{ k + 1} } \int_0^1 K_{2 \mu} \lp \frac {4 \pi \varv} {c} \rp  S_k \lp \varv,  \frac {4 \pi} {c} \rp  d \varv,
\end{align}
with
\begin{align}\label{4eq: S k(v, a)}
S_k (\varv, a) = \sum_{n = 0}^{k}  (-)^n   C_{2k}^{k - n}   \sum_{r = 0}^{\left\lfloor   n/2 \right\rfloor}  (2/a)^{n-2r} D_n^r \shskip  \partial_{\varv}^{n-2r} \big( I_n (a \varv) P_k (\varv) \big) ,
\end{align}
and $P_k (\varv) = \varv^{k} (1-\varv)^{k - 1}$. Therefore, the formula \eqref{0eq: int of J = IK} in Theorem \ref{thm: integral of J} follows immediately from the vanishing of  $S_k (\varv, a)$. When $k = 1$, for instance, we see from \eqref{2eq: recurrence, I} that
\begin{align*}
S_1 (\varv, a) = 2 I_0(a \varv) \varv - (2/a) \lp a I_1' (a\varv) \varv + I_1 (a \varv) \rp = 0.
\end{align*}
In general, $S_k (\varv, a ) \equiv 0$ may be readily proven by the following recursive identity.

\begin{prop}\label{prop: recursive S(a, v)} Let $P_k (\varv) = \varv^k (1-\varv)^{k-1}$, $Q (\varv) = \varv  (1-\varv)$ and  $R (\varv) = 1 - 3\varv$. Let $S_k (\varv, a)$ be defined as in \eqref{4eq: S k(v, a)}. We have
	\begin{align*}
	a^2 S_{k+1} (\varv, a) = \ & -  4 \big(  Q (\varv) \lp   \partial_{\varv}^2 S_k (\varv, a) -  a^2  S_k (\varv, a) \rp  \\ & +  R (\varv)    \partial_{\varv} S_k (\varv, a)  +  (k^2-1)    S_k (\varv, a) -  (k-1) k \shskip    S_{k-1} (\varv, a) \big).
	\end{align*}
	
\end{prop}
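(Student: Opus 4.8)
The plan is to first collapse the defining double sum \eqref{4eq: S k(v, a)} into a closed form in which the coefficients $A_{k,\, l}^r$ of Lemma \ref{lem: combinatorics 2} appear explicitly, and then to deduce the recurrence by feeding the relations of Lemma \ref{lem: combinatorics 3} into this closed form.

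To obtain the closed form, I would expand $\partial_\varv^{n - 2r}\big(I_n (a \varv) P_k (\varv)\big)$ by the Leibniz rule, with $j$ derivatives falling on $P_k (\varv)$ and $n - 2r - j$ on $I_n (a\varv)$. Since $\partial_\varv^{n-2r-j} I_n (a \varv) = a^{n-2r-j} I_n^{(n-2r-j)} (a\varv)$, the powers of $a$ coalesce as $(2/a)^{n-2r} a^{n-2r-j} = 2^{n-2r} a^{-j}$; pulling out $a^{-j} P_k^{(j)}(\varv)$ and interchanging the sums over $r$ and $j$, the inner sum over $r$ becomes $\sum_r 2^{n-2r} D_n^r C_{n-2r}^j I_n^{(n-2r-j)} (a\varv)$. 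This is precisely the left-hand side of Lemma \ref{lem: recurrence, general case} with $\nu = n$ and $l = j$, hence equals $2^j \sum_{s} B_{j,\, n}^s I_{j + 2s} (a\varv)$, and the leftover factor $a^{-j} 2^j$ combines to $(2/a)^j$. Summing over $n$ against $(-)^n C_{2k}^{k - n}$ and invoking Lemma \ref{lem: combinatorics 2} then gives
\begin{equation*}
S_k (\varv, a) = \sum_{j = 0}^{k} \sum_{s = 0}^{k - j} (-)^{j+s} (2/a)^j A_{k,\, j}^s \, P_k^{(j)} (\varv)\, I_{j + 2s} (a \varv).
\end{equation*}
For $k = 1$ this reads $S_1 = \varv \big( I_0 (a\varv) - I_2 (a\varv) \big) - (2/a) I_1 (a\varv)$, which vanishes by \eqref{2eq: recurrence, I}, recovering the value quoted above.

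With this form in hand I would substitute the closed forms of $S_{k+1}$, $S_k$ and $S_{k-1}$ into the asserted identity. Two structural facts organize the comparison. On the polynomial side, $P_{k+1} = Q P_k$ and $Q'' = -2$ give, by Leibniz,
\begin{equation*}
P_{k+1}^{(l)} = Q\, P_k^{(l)} + l (1 - 2\varv)\, P_k^{(l-1)} - l(l-1)\, P_k^{(l-2)},
\end{equation*}
so that the $j = l$ terms of $S_{k+1}$ draw on the $l$-th, $(l-1)$-th and $(l-2)$-th derivatives of $P_k$. On the Bessel side, the combination $Q(\partial_\varv^2 - a^2)$ is tailored to the Bessel equation \eqref{eq: Bessel Equation}: when both derivatives of $Q \partial_\varv^2$ act on $I_\nu (a\varv)$, the resulting $Q\, P_k^{(j)} \partial_\varv^2 I_\nu (a\varv)$ combines with $- a^2 Q\, P_k^{(j)} I_\nu (a\varv)$ and reduces, via
\begin{equation*}
\big(\partial_\varv^2 - a^2\big) I_\nu (a\varv) = \frac {\nu^2} {\varv^2} I_\nu (a\varv) - \frac 1 \varv \partial_\varv I_\nu (a\varv),
\end{equation*}
to a term carrying the explicit weight $\nu^2 = (j + 2s)^2$ together with a first-derivative term; the latter, as well as the cross terms $P_k^{(j+1)} \partial_\varv I_\nu (a\varv)$ and the contribution of $R \partial_\varv$, are resolved through the recurrences \eqref{2eq: recurrence, I} and \eqref{2eq: recurrence, I and K}, which shift the order $j + 2s$ by $\pm 1$. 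After these reductions both sides are expressed through the single family $\{ P_k^{(l)} (\varv)\, I_{l + 2s} (a\varv) \}$.

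It then remains to match the coefficient of each $I_{\mu} (a\varv)$ — these being linearly independent over the ring of polynomials in $\varv$ and $1/a$, by inspection of their leading behaviour as $\varv \to 0$. This reduces the whole identity to a finite list of relations among the $A_{k,\, l}^r$, which split according to $l = 0$, $l = 1$ and $l \geqslant 2$ and are supplied, respectively, by \eqref{3eq: Ak0}, by \eqref{3eq: Ak1}--\eqref{3eq: Ak1, 2} and by \eqref{3eq: Akl, 0}--\eqref{3eq: Akl, 2} of Lemma \ref{lem: combinatorics 3}; the trichotomy is forced by the degeneration of the compact expression for $A_{k,\, l}^r$ at $l = 0, 1$ noted after Lemma \ref{lem: combinatorics 2}. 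I expect the main obstacle to be precisely the bookkeeping of this middle stage: tracking how the order $l + 2s$ and the derivative order on $P_k$ shift under $\partial_\varv$, reconciling the parity change introduced when first derivatives of $I_\nu (a\varv)$ are re-expanded through \eqref{2eq: recurrence, I and K}, and correctly treating the boundary indices (small $j$, $s = 0$, and $j$ near $k$) where several of the recurrences of Lemma \ref{lem: combinatorics 3} must be applied at once. Once the terms are aligned, the verification is a direct appeal to that lemma.
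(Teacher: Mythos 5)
Your first stage is correct and coincides with the paper's: the Leibniz expansion of $\partial_\varv^{n-2r}\big(I_n(a\varv)P_k(\varv)\big)$, the interchange of summations, and the appeal to Lemmas \ref{lem: recurrence, general case} and \ref{lem: combinatorics 2} produce exactly the closed form $S_k(\varv,a)=\sum_{l=0}^{k}(-2/a)^l P_k^{(l)}(\varv)\sum_{r=0}^{k-l}(-)^r A_{k,\,l}^r\, I_{l+2r}(a\varv)$ that the paper takes as the starting point of its proof.

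The endgame, however, has a genuine gap, located exactly where you predict the ``bookkeeping'' to lie. First, the functions $I_{\mu}(a\varv)$ are \emph{not} linearly independent over the ring of polynomials in $\varv$ and $1/a$: the three-term recurrence gives $\varv I_{\nu-1}(a\varv)-\varv I_{\nu+1}(a\varv)-(2\nu/a)I_{\nu}(a\varv)=0$, a dependence with coefficients in that very ring --- indeed your own check that $S_1=\varv(I_0-I_2)-(2/a)I_1$ vanishes uses precisely such a relation. So ``equate the coefficient of each $I_\mu$'' is not a legitimate reduction. Second, and more seriously, the claim that after your manipulations both sides live in the single family $\{P_k^{(l)}(\varv)I_{l+2s}(a\varv)\}$ is unsubstantiated: your Leibniz expansion of $P_{k+1}^{(l)}=(QP_k)^{(l)}$ produces the multipliers $Q$, $1-2\varv$ and $1$, while the right-hand side of the recurrence produces $Q$, $R=1-3\varv$, $1$, the rational factor $(j+2s)^2(1-\varv)/\varv$ coming from the Bessel equation, and the separate family $P_{k-1}^{(j)}$; none of these can be traded for one another by generic identities. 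What makes the comparison close in the paper is Lemma \ref{lem: PQR}, whose part (1) is \emph{not} the Leibniz rule (it carries $R$ and $k$-dependent coefficients, reflecting the specific shape $P_k=\varv^k(1-\varv)^{k-1}$) and whose part (2) is the only bridge between $P_{k-1}^{(l)}$ and the $P_k^{(l)}$ family. Your plan contains no substitute for either, so the proposed term-by-term matching cannot be completed as described; the working organization (the paper's) is to partition the combination by the derivative order $l$ of $P_k$, apply Lemma \ref{lem: PQR} within each block, and only then invoke Lemma \ref{lem: combinatorics 3}.
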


Before   starting the proof of Proposition \ref{prop: recursive S(a, v)}, let us prove one more lemma on the derivatives of $P_k (\varv) = \varv^k (1-\varv)^{k-1}$ that will play a very crucial role afterwards. It shows how their derivatives for two consecutive $k$ are related simply by $Q (\varv) = \varv  (1-\varv)$ and  $R (\varv) =  1 - 3\varv$. This lemma combines perfectly with  Lemma \ref{lem: combinatorics 3} and they will work together  to finish the proof.

\begin{lem}\label{lem: PQR}
	Let $l  $ be a nonnegative integer.
	
	{\rm (1).} We have
	\begin{align}\label{4eq: PQR, 1}
		(k - l +2 ) & P_{k+1}^{(l)} = (k+2) P_k^{(l)} Q + l P_k^{(l-1)}   R + (k-1)  (l-1) l P_k^{(l-2)}.
	\end{align}
	It is understood that, when $l = 0, 1$, the terms containing $P_{k}^{(-1)}$, $P_{k}^{(-2)}$ 
	do not occur in the identities due to the appearance of the factors  $l$, $l-1$.
	
	{\rm (2).} We have 
	\begin{equation}\label{4eq: PQR, 2}
		\begin{split}
			(k-1) k (k-l) P_{k-1}^{(l)} = (k-2) P_k^{(l+2)} Q & + (2k-l-2) P_k^{(l+1)} R \\
			& + (k+1) (2k-l-2) (2k-l-1) P_k^{(l )} .
		\end{split}
	\end{equation}
\end{lem}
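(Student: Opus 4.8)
The plan is to route both identities through the exponential generating function of the derivative sequence, which converts the $l$-recursions into elementary functional identities. For fixed $\varv$ set
\[
\Phi_k(t) = P_k(\varv+t) = (\varv+t)^k(1-\varv-t)^{k-1},
\]
a polynomial in $t$, so that Taylor's formula gives the exact expansion $\Phi_k(t)=\sum_{l\geqslant 0}P_k^{(l)}(\varv)\,t^l/l!$. Two elementary ladder relations drive everything: differentiating the product structure gives $\Phi_{k+1}(t)=(\varv+t)(1-\varv-t)\,\Phi_k(t)$ (the generating-function avatar of $P_{k+1}=QP_k$), and a one-line computation gives $\partial_t\Phi_k(t)=\bigl(k-(2k-1)(\varv+t)\bigr)\Phi_{k-1}(t)$ (the avatar of $P_k'=(k-(2k-1)\varv)P_{k-1}$). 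Each claimed identity is a linear relation among the sequences $\{P_j^{(l)}\}_l$ whose coefficients are polynomial in $l$; multiplying by $t^l/l!$ and summing over $l\geqslant 0$ turns it into a single identity among the $\Phi_j$ and their $t$-derivatives, under the dictionary in which multiplication by $l$ is the operator $\theta=t\,\partial_t$, a shift $l\mapsto l+1$ is $\partial_t$, and a shift $l\mapsto l-1$ is multiplication by $t$. The sign conventions on negative-index derivatives are respected automatically, since $t\Phi_k$ and $t^2\Phi_k$ carry no negative powers of $t$.

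For Part (1), summing \eqref{4eq: PQR, 1} against $t^l/l!$ (note $(k-l+2)\mapsto (k+2)-\theta$) yields
\[
(k+2)\Phi_{k+1}-t\,\partial_t\Phi_{k+1}=\bigl((k+2)Q(\varv)+R(\varv)\,t+(k-1)t^2\bigr)\Phi_k.
\]
Substituting $\Phi_{k+1}=(\varv+t)(1-\varv-t)\Phi_k$ and $\partial_t\Phi_{k+1}=\bigl(k+1-(2k+1)(\varv+t)\bigr)\Phi_k$ and cancelling $\Phi_k$, this collapses to the scalar identity
\[
(k+2)(\varv+t)(1-\varv-t)-t\bigl(k+1-(2k+1)(\varv+t)\bigr)=(k+2)\varv(1-\varv)+(1-3\varv)t+(k-1)t^2,
\]
which is verified by expanding and matching the coefficients of $1,t,t^2$ (as polynomials in $\varv$).

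For Part (2), the same procedure turns \eqref{4eq: PQR, 2} into
\[
(k-1)k\bigl(k\Phi_{k-1}-t\,\partial_t\Phi_{k-1}\bigr)=(k-2)Q(\varv)\,\partial_t^2\Phi_k+R(\varv)\bigl((2k-2)\partial_t\Phi_k-t\,\partial_t^2\Phi_k\bigr)+(k+1)\bigl(t^2\partial_t^2\Phi_k-4(k-1)t\,\partial_t\Phi_k+(2k-2)(2k-1)\Phi_k\bigr);
\]
the only subtlety is that the quadratic-in-$l$ weight $(2k-l-2)(2k-l-1)$ produces $\theta^2=\theta+t^2\partial_t^2$, which accounts for the $t^2\partial_t^2$ and $-4(k-1)t\partial_t$ terms. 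Writing $u=\varv+t$ and factoring out the common term $G=u^{k-2}(1-u)^{k-3}$, the ladder relations give $\Phi_{k-1}=u(1-u)G$, $\Phi_k=u^2(1-u)^2G$, $\partial_t\Phi_k=(k-(2k-1)u)u(1-u)G$, $\partial_t\Phi_{k-1}=((k-1)-(2k-3)u)G$, and a direct differentiation gives $\partial_t^2\Phi_k=(k-1)(k-4ku+(4k-2)u^2)G$. Dividing through by $G$ and substituting $\varv=u-t$ reduces Part (2) to a polynomial identity in $u$ and $t$ with parameter $k$.

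The conceptual content lies entirely in the two ladder relations and the translation dictionary; once these are in place Part (1) is immediate. The genuine labour is the closing verification for Part (2): although elementary, the resulting polynomial identity is of higher degree, and one must track the $\partial_t^2\Phi_k$ contribution and the substitution $\varv=u-t$ carefully. I would also note that $G$ is literally a polynomial only for $k\geqslant 3$; since both sides of \eqref{4eq: PQR, 2} are polynomial in the parameter $k$, the identity for all admissible $k$ (in particular the degenerate small values, where the prefactors $(k-1)k$ or $k-2$ frequently vanish) follows by polynomiality in $k$, with the few remaining degenerate cases checked by hand.
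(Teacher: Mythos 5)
Your proposal is correct, and it is a genuinely different organization of the same underlying facts. Both arguments rest on the two ladder relations $P_{k+1}=P_kQ$ (equivalently $P_{k+1}'=((k+1)-(2k+1)\varv)P_k$) and $(k-1)kP_{k-1}=((k-1)-(2k-1)\varv)P_k'+(2k-1)^2P_k$. The paper differentiates these $l$ times by Leibniz and proves each of \eqref{4eq: PQR, 1} and \eqref{4eq: PQR, 2} by induction on $l$, the induction step being ``differentiate the level-$l$ identity and subtract the recursion''; you instead sum against $t^l/l!$ and collapse the whole family over $l$ into a single polynomial identity. I checked your operator dictionary (multiplication by $l$ becoming $t\partial_t$, index shifts becoming $\partial_t$ or multiplication by $t$), both ladder relations, the closed forms for $\partial_t\Phi_k$, $\partial_t\Phi_{k-1}$ and $\partial_t^2\Phi_k=(k-1)(k-4ku+(4k-2)u^2)G$, and the scalar identity that finishes Part (1); all are correct. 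The trade-off shows up in Part (2): the paper's induction needs only the base case $l=0$ to be checked directly, whereas your route requires verifying the full identity in both $u$ and $t$, which you set up correctly but leave unexecuted. That identity does hold (at $t=0$ it reduces exactly to the paper's base case, which I confirmed for $k=3$), so this is deferred mechanical labour rather than a flaw --- but it is the one piece you must still write out, together with a slightly more precise version of the small-$k$ discussion: for $k\leqslant 2$ the exponents in $G$ are negative, and the clean fix is exactly your remark that after dividing by $G$ the identity has bounded degree in $u,t$ with coefficients polynomial in $k$, so validity for all $k\geqslant 3$ forces validity for every $k$.
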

\begin{proof}
	First, we have
	\begin{align*}
		P_{k+1}' (\varv) = ((k+1) - (2k+1) \varv) P_{k} (\varv),
	\end{align*} 
	and therefore
	\begin{align}\label{4eq: derivative, 1}
		P_{k+1}^{(l+1)} (\varv) = ((k+1) - (2k+1) \varv) P_{k}^{(l )} (\varv) - (2k+1)l P_{k}^{(l-1)} (\varv).
	\end{align}
	We prove \eqref{4eq: PQR, 1} inductively. When $l = 0$, \eqref{4eq: PQR, 1} is simply $P_{k+1} (\varv) = P_k (\varv) Q (\varv)$. We now assume that  the identity \eqref{4eq: PQR, 1} is valid for $l$. By differentiating \eqref{4eq: PQR, 1} and then subtracting   \eqref{4eq: derivative, 1}, we obtain  the identity \eqref{4eq: PQR, 1} for $l+1$.
	
	Second, we have
	\begin{align*}
		(k-1) k P_{k-1} (\varv) = \lp (k-1) - (2k-1) \varv \rp P_k' (\varv)  + (2k-1)^2 P_k (\varv) ,
	\end{align*}
	and therefore
	\begin{align}\label{4eq: derivative, 2}
		(k-1) k P_{k-1}^{(l+1)} (\varv) = ((k-1) - (2k-1) \varv) P_{k}^{(l+2 )} (\varv) - (2k-1) (2k-l-2) P_{k}^{(l+1)} (\varv).
	\end{align}
	With \eqref{4eq: derivative, 2}, we may prove \eqref{4eq: PQR, 2} in the same fashion as \eqref{4eq: PQR, 1}.
\end{proof}

\subsection{Proof of Proposition \ref{prop: recursive S(a, v)}}
In the rest of this section, we shall exploit the combinatorial results established in \S \ref{sec: combinatorial}, along with Lemma \ref{lem: PQR}, to prove Proposition \ref{prop: recursive S(a, v)}. We would like to first get rid of the derivatives of $I_n$ in the expression of  $S_k (\varv, a)$ as in \eqref{4eq: S k(v, a)} by Lemma \ref{lem: recurrence, general case} and \ref{lem: combinatorics 2}. After this,  arise the combinatorial coefficients $A_{k,\shskip  l}^r$ defined in Lemma  \ref{lem: combinatorics 2}. Finally, the identities for  $A_{k,\shskip  l}^r$ and for the derivatives of $P_k$  in Lemma \ref{lem: combinatorics 3} and  \ref{lem: PQR} will be applied    to complete the proof.

First, applying Lemma \ref{lem: recurrence, general case} and \ref{lem: combinatorics 2},
we simplify $S_k (\varv, a)$ as follows,
\begin{align*}
S_k (\varv, a)  & =  \sum_{n = 0}^{k} (-)^n C_{2k}^{k - n} \sum_{r = 0}^{\left\lfloor   n/2 \right\rfloor}  2^{n-2r} D_{n }^r  \sum_{ l = 0}^{n-2r} C_{n-2r}^{l}  \shskip  (1/a)^{ l} I_{ n }^{(n-l-2r )} (a \varv) P_k^{(l)} (\varv) \\ 
& = \sum_{ l = 0}^{k } (1/a)^{ l} P_k^{(l)} (\varv) \sum_{n = l}^{k} (-)^n C_{2k}^{k - n}  \sum_{r = 0}^{\left\lfloor   (n-l)/2 \right\rfloor} 2^{n-2r} D_{n }^r   C_{n-2r}^{l} \shskip  I_{ n }^{(n-l-2r )} (a \varv) \\
& = \sum_{ l = 0}^{k } (2/a)^l P_k^{(l)} (\varv) \sum_{n = l}^{k} (-)^n C_{2k}^{k - n}  \sum_{r = 0}^{ n-l   } B_{l,\shskip  n }^r \shskip  I_{ l + 2 r } (a \varv) \\
& = \sum_{ l = 0}^{k } (2/a)^l P_k^{(l)} (\varv) \sum_{r =0}^{k-l} I_{l+2r} (a \varv) \sum_{n=l+r}^k (-)^n C_{2k}^{k - n}  B_{l,\shskip  n }^r   \\
& =   \sum_{ l = 0}^{k } (-2/a)^l P_k^{(l)} (\varv) \sum_{r =0}^{k-l} (-)^r A_{k,\shskip  l}^r I_{l+2r} (a \varv).
\end{align*}
Accordingly, we define  
\begin{align}\label{4eq: S kl}
S_{k,\shskip  l} (z) = \sum_{r =0}^{k-l} (-)^r A_{k,\shskip  l}^r I_{l+2r} (z),
\end{align}
so that
\begin{align}\label{4eq: S k}
S_{k} (\varv, a) = \sum_{ l = 0}^{k } (-2/a)^l P_k^{(l)} (\varv) S_{k,\shskip  l} (a \varv).
\end{align}
In view of our conventions on $A_{k,\shskip  l}^{r}$,  we shall put  $S_{k,\shskip  l} = 0$ when  either $k < l$ or $l < 0$. By $2 I'_{\nu} (z) = I_{\nu-1} (z) + I_{\nu+1} (z)$ and $4 I''_{\nu} (z) = I_{\nu-2} (z) + 2 I_{\nu} (z) + I_{\nu+2} (z)$ (see \eqref{2eq: recurrence, I and K}), we have
\begin{align}\label{4eq: S kl '}
2 S_{k,\shskip  l-1}' (z) = \sum_{r = -1}^{k - l + 1} (-)^r \big(  A_{k,\shskip  l-1}^r - A_{k,\shskip  l-1}^{r+1} \big) I_{l + 2 r } (z),
\end{align} 
\begin{align}\label{4eq: S kl ''}
4 S_{k,\shskip  l}'' (z) = - \sum_{r = -1}^{k - l + 1} (-)^r \big(  A_{k,\shskip  l}^{r-1} - 2 A_{k,\shskip  l}^{r } + A_{k,\shskip  l}^{r+1} \big) I_{l + 2 r } (z).
\end{align}

After these preparations, we are now ready to finish the proof. Consider 
\begin{align*}
S_{k+1}  +    Q \lp  (2/a)^2    \partial_{\varv}^2 S_k - 4   S_k \rp   +  R  (2/a)^2  \partial_{\varv} S_k  +  (k^2-1)  (2/a)^2 S_k ,
\end{align*}
and recall that our goal is to prove that it is equal to $ (k-1) k \shskip  (2/a)^2 S_{k-1} $. Using \eqref{4eq: S k}, straightforward calculations show that it may be partitioned into the sum of
\begin{equation}\label{4eq: partition, 4}
(-2 / a)^{k + 2 }   \lp  P_k^{(k+2)} Q +  P_k^{(k+1)} R +   (k^2-1)   P_k^{(k)}   \rp S_{k,\shskip  k},
\end{equation}
\begin{equation}\label{4eq: partition, 2}
P_{k+1} S_{k+1, 0} + 4 P_k Q \lp    S_{k, 0 }'' -  S_{k, 0 } \rp,
\end{equation}
\begin{equation}\label{4eq: partition, 3}
(- 2 / a ) \lp  P_{k+1}' S_{k+1, 1} - 4 P_k' Q \lp     S_{k,  0}' - S_{k, 1}''   + S_{k, 1}   \rp - 2 P_k R \shskip  S_{k,  0} ' \rp,
\end{equation}
\begin{equation}\label{4eq: partition, 1}
\begin{split}
\sum_{ l = 2}^{k + 1} (-2/a)^l \Big( & P_{k+1}^{(l)}  S_{k+1,\shskip  l}  
+  P_k^{(l)}  Q  \lp    S_{k,\shskip  l-2}  - 4   S_{k,\shskip  l-1}' + 4   S_{k,\shskip  l }''   - 4 S_{k,\shskip  l } \rp    \\
& +    P_k^{(l-1)} R    \lp   S_{k,\shskip  l-2} - 2 S_{k,\shskip  l-1} '   \rp 
+ (k^2-1)   P_k^{(l-2)} \shskip  S_{k,\shskip  l - 2}   \Big).
\end{split}
\end{equation}
In the above expressions, for succinctness, we have suppressed the arguments  $\varv$, $a$ and $a \varv$ from $S_k (\varv, a)$..., $P_k^{(l)} (\varv)$..., $Q (\varv)$, $R (\varv)$ and $S_{k,\shskip  l} (a \varv)$.... Next we apply Lemma  \ref{lem: combinatorics 3} and  \ref{lem: PQR}.

First, choosing $l = k+2$ in \eqref{4eq: PQR, 1} in Lemma \ref{lem: PQR}, it is clear that the sum in \eqref{4eq: partition, 4} vanishes. 
Second, computing with (\ref{4eq: S kl}, \ref{4eq: S kl '}, \ref{4eq: S kl ''}), Lemma \ref{lem: PQR} (1) in the cases $l = 0$ and $ 1$, Lemma \ref{lem: combinatorics 3} (1) and (2) imply that both  \eqref{4eq: partition, 2}	and \eqref{4eq: partition, 3} yield zero contribution. Note that, for $S_{k,\shskip 0}''$, $S_{k,\shskip 0}'$ and $S_{k,\shskip 1}''$  that occur in \eqref{4eq: partition, 2}	and \eqref{4eq: partition, 3}, one needs to combine the terms with $r = -1$ and $r = 1$ in \eqref{4eq: S kl '} or \eqref{4eq: S kl ''} by $I_{ - 1} (z) = I_{ 1} (z)$ and $I_{ - 2} (z) = I_{ 2} (z)$. Finally, from   Lemma \ref{lem: PQR} (1), Lemma \ref{lem: combinatorics 3} (3) and at last Lemma \ref{lem: PQR} (2), we infer that \eqref{4eq: partition, 1} is equal to
\begin{align*}
  -   (k-1) k \sum_{ l = 2}^{k + 1} (-2/a)^l P_{k-1}^{(l-2)} \sum_{r=-1}^{k-l +1} (-)^r    A^{r+1 }_{k-1 ,\shskip  l-2} I_{l+ 2 r} = (k-1) k \shskip  (2/a)^2 S_{k-1}. 
\end{align*}
	This completes the proof of Proposition \ref{prop: recursive S(a, v)} and hence Theorem \ref{thm: integral of J}.

\section{Proof of Theorem \ref{thm: main 1}}

Assume for simplicity that $\mu \neq 0$. Let $\rho = |\Re \mu | < \frac 1 2$.
We denote
\begin{align} 
\label{5eq: G (y)}
\bfG_{ \mu,\shskip  m } \lp y e^{i \theta} \rp & =  2 \int_{0}^{2 \pi} \int_0^\infty   \bfJ_{ \mu,\shskip  m }    \lp x e^{i\phi} \rp  e \lp - \frac { 2 x  \cos (\phi - \theta) } { y} \rp  d x  d \phi , \\
\label{5eq: F (y)}
& \bfF_{ \mu,\shskip  m } \lp y e^{i \theta} \rp  =  \frac { 2 } {\hskip 1 pt y \hskip 1 pt} e\lp -  y  \cos \theta   \rp \bfG_{ \mu,\shskip  m } \lp y e^{i \theta} \rp .
\end{align}

\subsection{Asymptotic of $\bfG_{\mu,\shskip  m} \lp y e^{i \theta} \rp$}\label{sec: asymptotic}

Our first task is  to prove the following asymptotic of $\bfG_{\mu,\shskip  m} \lp y e^{i \theta} \rp$,
\begin{equation}\label{3eq: asymptotic G (y)}
\bfG_{\mu,\shskip  m} \lp y e^{i \theta} \rp \sim      e \lp 2 y \cos \theta \rp + (-1)^{\frac 1 2 m} , \hskip 10 pt y \ra \infty,
\end{equation}
which yields
\begin{equation}\label{3eq: asymptotic F (y)}
\bfF_{\mu,\shskip  m} \lp y e^{i \theta} \rp \sim \frac { 2 } {\hskip 1 pt y \hskip 1 pt}     \lp e \lp   y \cos \theta \rp + (- )^{\frac 1 2 m} e\lp -  y  \cos \theta  \rp  \rp, \hskip 10 pt y \ra \infty.
\end{equation}

In the sequel, we shall fix a constant $ 0 < \delta < \frac 1 3$  and let $y  $ be sufficiently large. All the implied constants in our computations will only depend on $\delta$, $\rho$ and $ m $.

We split $\bfG_{\mu,\shskip  m} \lp y e^{i \theta} \rp$ as the sum
\begin{align*}
\bfG_{\mu,\shskip  m} \lp y e^{i \theta} \rp & =  \bfC_{\mu,\shskip  m} \lp y e^{i \theta} \rp + \bfD_{\mu,\shskip  m} \lp y e^{i \theta} \rp + \bfE_{\mu,\shskip  m} \lp y e^{i \theta} \rp \\
& = 2 \int_{0}^{2 \pi} \int_0^{4 y^{2 \delta}} u (x) \bfJ_{ \mu,\shskip  m }    \lp x e^{i\phi} \rp  e \lp - \frac { 2 x  \cos (\phi - \theta) } { y} \rp  d x d \phi  \\
& \hskip 13 pt + 2 \int_{0}^{2 \pi} \int_{ \frac 1 9 y^{2} }^{  9 y^2 }  w \big(    \sqrt {x } / y \big)   \bfJ_{ \mu,\shskip  m }    \lp x e^{i\phi} \rp  e \lp - \frac { 2 x  \cos (\phi - \theta) } { y} \rp d x d \phi \\
& \hskip 13 pt + 2 \int_{0}^{2 \pi} \int_{   y^{2 \delta} }^{\frac 1 4 y^2}  +   \int_{0}^{2 \pi} \int_{ 4 y^2 }^{\infty}   v \big(    \sqrt {x } / y \big)   \bfJ_{ \mu,\shskip  m }    \lp x e^{i\phi} \rp  e \lp - \frac { 2 x  \cos (\phi - \theta) } { y} \rp d x d \phi,
\end{align*}
where  $ u \big(y^2x^2 \big) + v (x) + w (x) \equiv 1 $
is a partition of unity on $ \lp 0 , \infty \rp$ such that $u (x) $, $v (x)  $ and $w (x) $ are smooth functions supported on $\lp 0, 4 y^{2 \delta  } \right]$,       $ \left[  y^{\delta - 1} , \tfrac 1 2 \right] \cup  \left[  2, \infty \rp$ and  $   \left[\frac 1 3, 3\right]   $ respectively, and that $x^r u^{(r)} (x), x^r v^{(r)}  (x), x^r w^{(r)}   (x) $ are bounded   for $r = 0, 1, 2$.

We shall first prove that $\bfC_{\mu,\shskip  m} \lp y e^{i \theta} \rp  \sim  (-1)^{\frac 1 2 m}$ as $y \ra \infty$ by the radial exponential integral formula \eqref{0eq: int of J = IK} in Theorem \ref{thm: integral of J}. A key observation is that $ e (- x \cos (\phi - \theta) /y) \sim \exp \lp - 2 \pi x /y  \rp \sim 1$ when $x \leqslant 4 y^{2 \delta}$. We shall then treat $\bfD_{\mu,\shskip  m} \lp y e^{i \theta} \rp$ and $ \bfE_{\mu,\shskip  m} \lp y e^{i \theta} \rp$ by the method of stationary phase. Since the stationary point is in the support of $w$ but outside of that of $v$, we may show that  the main term $\bfD_{\mu,\shskip  m} \lp y e^{i \theta} \rp \sim e (2 y \cos \theta)$ and that $ \bfE_{\mu,\shskip  m} \lp y e^{i \theta} \rp$ contributes as an error term. 
Note that the variable $x$ will be changed into $y^2 x^2$ in the defining integrals for $\bfD_{\mu,\shskip  m} \lp y e^{i \theta} \rp$ and $ \bfE_{\mu,\shskip  m} \lp y e^{i \theta} \rp$, so we have chosen to write a strange-looking $u \big(y^2 x^2 \big)$ in the partition of unity above. 

A fundamental problem of concern is the convergence of the integal in  \eqref{eq: main 1} or \eqref{5eq: G (y)}. 
The treatment of  $ \bfE_{\mu,\shskip  m} \lp y e^{i \theta} \rp$ in \S \ref{sec: Bound for E} will suggest that the integral should become absolutely convergent after one application of the elaborated partial integration of H\"ormander. 
We however feel that it is necessary to extract the main ideas and provide more details exclusively for the proof of the convergence. This will be done in Appendix \ref{appendix: convergence}.


\subsubsection{Asymptotic of $\bfC_{\mu,\shskip  m} \lp y e^{i \theta} \rp$}
When $x \leqslant 4 y^{2 \delta}$, we write $e (- x \cos (\phi - \theta) /y) = 1 + O (x/y)$. In view of the estimate in \eqref{2eq: bounds for J}, the contribution of the error term to $\bfC_{\mu,\shskip  m} \lp y e^{i \theta} \rp$ is bounded by
\begin{align*}
\frac 1 y \int_0^{2 \pi} \int_{0}^{1}   x^{- 2 \rho + 1 }  d x d \phi + \frac 1 y \int_0^{2 \pi} \int_{1}^{4 y^{2 \delta}}     \sqrt x   d x  d \phi = O \big( y^{  3   \delta - 1} \big).
\end{align*}
Therefore,
\begin{align*}
\bfC_{\mu,\shskip  m} \lp y e^{i \theta} \rp = 2 \int_0^{2 \pi} \int_{0}^{4 y^{2 \delta}} u (x) \bfJ_{ \mu,\shskip  m }  \lp x e^{i\phi} \rp d x d \phi +  O \big( y^{  3    \delta - 1} \big).
\end{align*}
On the other hand, choosing $c = 1/ y $ in Theorem \ref{thm: integral of J}, from the asymptotics of modified Bessel functions in \eqref{2eq: asymptotic I} and \eqref{2eq: asymptotic K} we infer that
\begin{align*}
2 \int_0^{2 \pi} \int_{0}^{\infty} \bfJ_{ \mu,\shskip  m }  \lp x e^{i\phi} \rp \exp \lp - \frac {2 \pi x}  y  \rp d x d \phi = (-1)^{\frac 1 2 m} + O \lp \frac 1 y \rp.
\end{align*}
With slight abuse of notation, we denote the double integral on the left by $\bfC_{\mu,\shskip  m} \lp y   \rp $, and split $\bfC_{\mu,\shskip  m} \lp y   \rp = \bfA_{\mu,\shskip  m} \lp y   \rp + \bfB_{\mu,\shskip  m} \lp y   \rp $ according to the partition of unity $u (x) + \lp 1 - u (x) \rp \equiv 1$. 
On writing $ \exp \lp - 2 \pi x /y  \rp = 1 + O \lp x /y  \rp$ for $x \leqslant 4 y^{2 \delta}$, similar as above, we find that
\begin{align*}
\bfA_{\mu,\shskip  m} \lp y   \rp = 2 \int_0^{2 \pi} \int_{0}^{4 y^{2 \delta}} u (x) \bfJ_{ \mu,\shskip  m }  \lp x e^{i\phi} \rp   d x d \phi + O \big( y^{  3   \delta - 1} \big).
\end{align*}
As for $ \bfB_{\mu,\shskip  m} \lp y   \rp$, we   insert the asymptotic of $\bfJ_{ \mu,\shskip  m }  \lp x e^{i\phi} \rp$ as in \eqref{2eq: asymptotic of J mu m}, with an error term contribution $ O \big( y^{-   \delta} \big) $. For the pair of leading terms, we need to consider
\begin{align*}
B  (y) =  \int_{y^{ 2 \delta}}^{\infty}     { \lp 1 - u (x) \rp \exp (- 2 \pi x / y)  }  \frac 1 {\sqrt x} \int_0^{4 \pi}   e \lp  4 \sqrt x \cos \lp \tfrac 1 2 \phi \rp \rp d \phi d x.
\end{align*} 
For the two pairs of  lower order terms, the treatments will be similar. In view of  \eqref{2eq: integral repn of J} and \eqref{2eq: asymptotic J}, we see that 
\begin{align*}
 B(y) & = 4 \pi \int_{y^{2  \delta}}^{\infty} \lp 1 - u (x) \rp   {\exp (- 2 \pi x  / y)} \frac 1 {\sqrt x}    J_0 (8 \pi \sqrt x) d x \\
& =   2 \int_{y^{2 \delta}}^{\infty} \lp 1 - u (x) \rp x^{- \frac 3 4}   {\exp (- 2 \pi x  / y)}      \cos \lp 8 \pi \sqrt x - \tfrac 1 4 \pi \rp d x + O \big( y^{- \frac 1 2 \delta} \big)  .
\end{align*}
Applying integration by parts to the oscillatory integral, we get
\begin{align*}
B (y) & =   \frac 1 {8 \pi y} \int_{y^{ 2 \delta}}^{\infty}   ( 1 - u (x) ) \big( {x^{- \frac 5 4}} y + 8 \pi x^{-\frac 1 4}   \big) \exp (- 2 \pi x  / y) \sin \lp 8 \pi \sqrt x - \tfrac 1 4 \pi \rp d x \\
& \hskip 13 pt + \frac 1 { 2 \pi} \int_{y^{ 2 \delta}}^{4 y^{ 2 \delta} } \hskip -2 pt u' (x)  x^{-\frac 1 4}   \exp (- 2 \pi x  / y) \sin \lp 8 \pi \sqrt x - \tfrac 1 4 \pi \rp d x + O \big( y^{- \frac 1 2 \delta} \big) \\
& =   O \big( y^{- \frac 1 2  \delta} + y^{\frac 3 2 \delta - 1} \big).
\end{align*}
Combining the foregoing results, we obtain
\begin{equation}\label{5eq: asymptotic of C}
\bfC_{\mu,\shskip  m} \lp y e^{i \theta} \rp  =  (-1)^{\frac 1 2 m} + O \big( y^{  3    \delta - 1} + y^{- \frac 1 2 \delta}\big).
\end{equation}

\subsubsection{Asymptotic of $\bfD_{\mu,\shskip  m} \lp y e^{i \theta} \rp$}\label{sec: asymptotic D}
Insert the asymptotic of    $\bfJ_{ \mu,\shskip  m }  \lp x e^{i\phi} \rp$ in \eqref{2eq: asymptotic of J mu m} into the integral that defines $\bfD_{\mu,\shskip  m} \lp y e^{i \theta} \rp$. The contribution from the error term is $O  \lp 1/y \rp$. For the three pairs of main terms, we change the variable of integration from $x e^{i \phi}$ to $y^2 x^2 e^{2 i \phi}$. Then we obtain an oscillatory integral from the pair of leading terms as below, along with two similar integrals from the two pairs of lower order terms,
\begin{align*}
D \lp y, \theta \rp   = 4 y \int_0^{2 \pi} \int_{\frac 1 3}^{3}   w (x)   e ( y f ( x, \phi; \theta ) )   d x d \phi, 
\end{align*}
with the phase function $f (x, \phi, \theta)$ defined by
\begin{align*}
f (x, \phi; \theta) = 4 x \cos \phi - 2 x^2 \cos (2 \phi - \theta) .
\end{align*}
We have
\begin{align*}
f' (x, \phi; \theta) = \lp 4 \lp \cos \phi -  x \cos (2 \phi - \theta) \rp,  - 4 x \lp \sin \phi  -  x  \sin (2 \phi - \theta) \rp \rp.
\end{align*}
Hence there is a unique stationary point $(x_0, \phi_0) = (1, \theta)$ on the domain of integration. Moreover, we have $f (x_0, \phi_0; \theta) = 2 \cos \theta$, $\det f'' (x_0, \phi_0; \theta) = - 16$ and $w (x_0) = 1$. Applying Lemma \ref{lem: stationary phase}, we obtain
\begin{align*}
D (y, \theta) = e (2 y \cos \theta)+ O \lp 1/ y  \rp. 
\end{align*}
Similarly, we find that two other integrals of lower order are both $O (1/y)$.
Therefore,
\begin{align}\label{5eq: asymptotic of D}
\bfD_{\mu,\shskip  m} \lp y e^{i \theta} \rp = e (2 y \cos \theta)+ O \lp 1/ y  \rp.
\end{align}

 \subsubsection{Bound for $\bfE_{\mu,\shskip  m} \lp y e^{i \theta} \rp$}\label{sec: Bound for E}
Similar as in \S \ref{sec: asymptotic D}, from the asymptotic of $\bfJ_{ \mu,\shskip  m }  \lp x e^{i\phi} \rp$, the error term contributes $ O \big( y^{-  \delta} \big)$ and we need to consider the following oscillatory integral,
\begin{align*}
E \lp y, \theta \rp  & = 4 y \int_0^{2 \pi} \int_{y^{\delta - 1}}^{\infty}   v (x)   e ( y f (  x, \phi; \theta ) )   d x d \phi,
\end{align*}
and two other similar integrals. As the stationary point $(x_0, \phi_0) = (1, \theta)$ does not lie in the support of $v (x)$, according to the method of stationary phase, roughly speaking, one should get a saving of $y$ for each partial integration. Our goal is to show that $E \lp y, \theta \rp = o (1)$ after twice of partial integrations. 
For this, we define
\begin{equation*}
g (x, \phi; \theta) = x^{-1} \lp \partial_{x} f(x, \phi; \theta) \rp^2 + x^{-3} \lp \partial_\phi f(x, \phi; \theta) \rp^2 = 16 \lp x  + x^{-1} - 2 \cos (\phi - \theta) \rp.
\end{equation*}
An important observation is that, on  the support of $v (x)$, we have 
\begin{equation*}
	g (x, \phi; \theta) \geqslant 4 \max \left\{ 1/x , x  \right\} \text{(}\geqslant 8\text{)}, \hskip 10 pt  x \in \left[  y^{\delta - 1}, \tfrac 1 2 \right] \cup \left[ 2, \infty \rp.
\end{equation*}
Besides, the following simple upper bounds  will also be useful
\begin{align*}
& (1/x)   \partial_\phi^r f  (x, \phi; \theta),  \partial_\phi^r \partial_x f (x, \phi; \theta) \lll \max \big\{ 1, x \big\}, \hskip 10 pt \partial_\phi^r \partial_x^2  f (x, \phi; \theta) \lll 1, \hskip 10 pt \partial_x^3 f (x, \phi; \theta) \equiv 0, \\
& \partial_\phi, \partial_\phi^2 g  (x, \phi; \theta)  \lll 1,    \partial_x g (x, \phi; \theta) \lll \max \left\{ 1/x^2 , 1  \right\}, \partial_x^2 g  (x, \phi; \theta) \lll 1/x^3,   \partial_x \partial_\phi g (x, \phi; \theta) \equiv 0.
\end{align*} 
We now apply the elaborated partial integration of H\"ormander (see the proof of \cite[Theorem 7.7.1]{Hormander}\footnote{Our ideas in spirit are contained in the proof of H\"ormander's Theorem 7.7.1. Indeed,  $g (x, \phi; \theta) $ plays the same role as $|f'|^2 + \Im f$ in his proof, where $f$ is the phase function therein. His  Theorem 7.7.1 however does not apply here as our domain of integration is {\it non-compact}. Also, strictly speaking, our $g (x, \phi; \theta) $ is constructed differently (note the factors $x^{-1}$ and $x^{-3}$ in its definition) in an {\it ad hoc} manner.}). Writing
\begin{align*}
E \lp y, \theta \rp & =   \frac {2}{\pi i}  \int_0^{2 \pi} \int_{y^{\delta - 1}}^{\infty} \frac {v (x)} {g (x, \phi; \theta)} \bigg( \frac {  \partial_{x} f(x, \phi; \theta)} {x } \partial_x (   e (  y f (  x, \phi; \theta ) ) )   \\
& \hskip 97 pt  +   \frac {   \partial_{\phi} f(x, \phi; \theta)} {x^3  } \partial_\phi (   e (  y f (  x, \phi; \theta ) ) ) \bigg)  d x d \phi,
\end{align*}
and integrating by parts, then $E \lp y, \theta \rp  $ turns into
\begin{align*} 
&   -  \frac {2}{\pi i} \int_0^{2 \pi} \int_{y^{\delta - 1}}^{\infty}   \lp \hskip -1 pt \frac {\partial} {\partial x} \lp \frac {v (x) \partial_{x} f(x, \phi; \theta)} {x g (x, \phi; \theta)}  \rp \hskip -2 pt +  \frac {v (x)}{x^3} \frac {\partial} {\partial \phi} \lp \frac { \partial_{\phi} f(x, \phi; \theta)} { g (x, \phi; \theta)}    \rp  \hskip -1 pt \rp e ( y f ( x, \phi; \theta ) ) d x d \phi.
\end{align*}
We then calculate the derivatives in the integrand by  the product rule for differentiations and  estimate each resulting integral by  the  bounds for $g (x, \phi; \theta) $ and $f (x, \phi; \theta)$ as above.  
It is trivially bounded by $\log y$. Indeed, the absolute value of each integrand is bounded by either $1/ x$ or $1$ for  $x \in \left[ y^{ \delta - 1}, \frac 1 2 \right]$ and by either $1/x^2$ or $1/x^3$ for $ x \in \left[2, \infty\right)$. In particular, all the integrals are absolutely convergent. Some details may be found in Appendix \ref{appendix: convergence}.

Applying  partial integration once more yields an additional saving of $y$. 
Therefore
\begin{align*}
E \lp y, \theta \rp = O (1/y).
\end{align*}
Moreover, one application of H\"ormander's elaborated partial integration on the two integrals of lower order is sufficient to yield the bound $1/y$. Hence
\begin{align}\label{5eq: bound for E}
\bfE_{\mu,\shskip  m} \lp y e^{i \theta} \rp = O \lp y^{- \delta}   \rp.
\end{align}


\subsubsection{Conclusion} Combining   (\ref{5eq: asymptotic of C}, \ref{5eq: asymptotic of D}, \ref{5eq: bound for E}), the proof of \eqref{3eq: asymptotic G (y)} is now complete.

\subsection{Differential Equations for $\bfF_{ \mu,\shskip  m }  $ }\label{sec: differential equations}
	We are now going to verify
	\begin{align}\label{5eq: differential equations}
	\nabla_{ \mu + \frac 1 4 m} \lp \bfF_{ \mu,\shskip  m } (u / \pi) \rp = 0, \hskip 10 pt \overline \nabla_{ \mu - \frac 1 4 m} \lp \bfF_{ \mu,\shskip  m } (u / \pi) \rp = 0.
	\end{align}
	By symmetry, we only need to verify the former, which may be explicitly written as
	\begin{align}\label{5eq: differential equations, explicit}
	u^2 \frac {\partial^2 \bfF_{ \mu,\shskip  m }} {\partial u^2} (u) + u \frac {\partial  \bfF_{ \mu,\shskip  m }} {\partial u} (u) + \lp \pi^2 u^2 -   \lp \mu + \frac 1 4 m \rp^2 \rp \bfF_{ \mu,\shskip  m } (u) = 0.
	\end{align}
	
	Before proceeding to the calculations, we remark that the differentiations of  $\bfF_{ \mu,\shskip  m }$ (under the integral) should be interpreted   in the theory of distributions. This is explained as follows. For any test function $ f (u) \in  \SS (\BC \smallsetminus \{0\} )$, let
	\begin{align*}
	\left\langle \bfF_{ \mu,\shskip  m }  , f   \right\rangle = 2 \sideset{}{_{\BC \smallsetminus \{0 \}}}{\iint}  \bfJ_{ \mu, \shskip  m} (z) f^{\sharp} (z) \frac { i d z \nwedge d \overline z} {|z| },
	\end{align*}
	with $ f^{\sharp} (z) \in  \SS (\BC  )$ given by
	\begin{align*}
	f^{\sharp} (z) = \sideset{}{_{\BC \smallsetminus \{0 \}}}{\iint}  \frac 1 {\shskip|u |}  e \hskip -1 pt \lp -  \Tr \lp \frac {u  } {2  }    +  \frac z {u} \rp \rp  f (u)   \shskip {i d u \nwedge d \overline u}  . 
	\end{align*}
	Note here that $ f^{\sharp} (z)$ is the Fourier transform of $ e (- \Tr (1/2 u)) f (1/u) / |u|^3 $. According to the theory of distributions, 
	\begin{align*}
	 \left\langle  \partial \bfF_{ \mu,\shskip  m }  /\partial u , f \right\rangle = - \left\langle  \bfF_{ \mu,\shskip  m }, \partial f  /\partial u  \right\rangle = 2 \sideset{}{_{\BC \smallsetminus \{0 \}}}{\iint}  \bfJ_{ \mu, \shskip  m} (z) f^{\natural} (z) \frac { i d z \nwedge d \overline z} {|z| },
	\end{align*}
	with  $ f^{\natural} (z) \in  \SS (\BC  )$ given by 
	\begin{align*}
	f^{\natural} (z) & = - \sideset{}{_{\BC \smallsetminus \{0 \}}}{\iint}   \frac 1 {\shskip|u|} e \hskip -1 pt \lp -  \Tr \lp \frac {u  } {2  }    +  \frac z {u} \rp \rp \frac  {\partial f (u)} {\partial u}    {i d u \nwedge d \overline u}   \\
	& = \sideset{}{_{\BC \smallsetminus \{0 \}}}{\iint}  \frac  {\partial } {\partial u} \lp \frac 1 {\shskip|u|} e \hskip -1 pt \lp -  \Tr \lp \frac {u  } {2  }    +  \frac z {u} \rp \rp \rp f (u)  \shskip {i d u \nwedge d \overline u} .
	\end{align*} 
	In view of these,  differentiating  $\bfF_{ \mu,\shskip  m }$ under the integral is well justified in the theory of distributions. So are the formal calculations in what follows. 
	
	For $s  , r = 0, 1$, $2$, with $s + r = 0, 1, 2$, we introduce
	\begin{align*}
	\bfF_{s ,\shskip  r,\shskip  \mu, \shskip  m} (u) =   \frac 2 {\sqrt { u \overline u}} e \bigg( \hskip -2 pt -     \frac {u + \overline u} {2  }   \bigg) \sideset{}{_{\BC \smallsetminus \{0 \}}}{\iint} \hskip - 2 pt z^{s  +  r - \frac 1 2} \overline {z}^{- \frac 1 2} (\partial/\partial z)^{r} \bfJ_{ \mu, \shskip  m} (z) e \bigg( \hskip - 2 pt -     \frac z {u} - \frac {\overline z} {\overline u}     \bigg) i d z \nwedge d \overline z.
	\end{align*}
	Here $\bfF_{s ,\shskip  r,\shskip  \mu, \shskip  m} $ are regarded as distributions on $\BC \smallsetminus \{0\}$ as explained as above. 
	Note that $ \bfF_{  \mu, \shskip  m} = \bfF_{0 ,\shskip  0,\shskip  \mu, \shskip  m}  $.
	For brevity, we put $\bfF_{s ,\shskip  r}  = \bfF_{s ,\shskip  r,\shskip  \mu, \shskip  m}$ and $\bfF_{s } = \bfF_{s , \shskip 0}$.

	First, for $s  = 0, 1$, we have
	\begin{align*}
	\frac {\partial \bfF_{s  }} {\partial u} = - \lp   \pi i   + \frac 1 {2 u} \rp \bfF_{s  } + \frac {2 \pi i} {u^2} \bfF_{s +1 }
	\end{align*}
	and
	\begin{align*}
	\frac {\partial^2 \bfF_{0 }} {\partial u^2} & = \frac 1 {2 u^2} \bfF_{0  } -  \lp   \pi i + \frac 1 {2 u} \rp \frac {\partial \bfF_{0  }} {\partial u} - \frac {4 \pi i} {u^3} \bfF_{ 1   } + \frac {2 \pi i} {u^2} \frac {\partial \bfF_{1  }} {\partial u} \\
	& = \frac 1 {2 u^2} \bfF_{0  } - \lp   \pi i + \frac 1 {2 u} \rp \lp - \lp   \pi i + \frac 1 {2 u} \rp \bfF_{0  } + \frac {2 \pi i} {u^2} \bfF_{1  } \rp \\
	&  \hskip 11 pt  - \frac {4 \pi i} {u^3} \bfF_{1  } +  \frac {2 \pi i} {u^2} 
	\lp - \lp   \pi i + \frac 1 {2 u} \rp \bfF_{1  } + \frac {2 \pi i} {u^2} \bfF_{2 } \rp \\
	& = \lp - \pi^2 + \frac {\pi i} u + \frac {3} {4 u^2} \rp \bfF_{0  } + \lp  \frac {4 \pi^2} {u^2} - \frac {6 \pi i} {u^3} \rp \bfF_{1  } - \frac {4 \pi^2} {u^4} \bfF_{2 }.
	\end{align*}
	Second, for $s, r  = 0 , 1 $, with $s + r = 0, 1$,
	by partial integration, 
	\begin{align*}
	\frac {4 \pi i} u    \sideset{}{_{\BC \smallsetminus \{0 \}}}{\iint} & \hskip - 2 pt z^{s  +  r + \frac 1 2} \overline {z}^{- \frac 1 2} (\partial/\partial z)^{r} \bfJ_{  \mu, \shskip  m} (z) e \bigg( \hskip -2 pt - \hskip -1 pt \bigg( \frac z {u} + \frac {\overline z} {\overline u} \bigg) \bigg) i d z \nwedge d \overline z \\ 
	& =     \lp  2 s  + 2  r + 1 \rp   \sideset{}{_{\BC \smallsetminus \{0 \}}}{\iint} \hskip - 2 pt z^{s  +  r - \frac 1 2} \overline {z}^{- \frac 1 2} (\partial/\partial z)^{r} \bfJ_{  \mu, \shskip  m} (z) e \bigg( \hskip -2 pt - \hskip -1 pt \bigg( \frac z {u} + \frac {\overline z} {\overline u} \bigg) \bigg) i d z \nwedge d \overline z \\
	& \hskip 45 pt + 2 \sideset{}{_{\BC \smallsetminus \{0 \}}}{\iint} \hskip - 2 pt z^{s  +  r + \frac 1 2} \overline {z}^{- \frac 1 2} (\partial/\partial z)^{r+1} \bfJ_{ \mu, \shskip  m} (z) e \bigg( \hskip -2 pt - \hskip -1 pt \bigg( \frac z {u} + \frac {\overline z} {\overline u} \bigg) \bigg) i d z \nwedge d \overline z.
	\end{align*}
	It follows that 
	\begin{align*}
	  \frac {4 \pi i} u \bfF_{s +1,\shskip  r } =  \lp 2  s  +  2 r + 1 \rp \bfF_{s ,\shskip  r } + 2 \bfF_{s ,\shskip  r+1 }, 
	\end{align*}
	and hence
	\begin{align*}
	- \frac {16 \pi^2} {u^2} \bfF_{2 }   =   \frac {12 \pi i  } u     \bfF_{1  } + \frac {8 \pi i} u \bfF_{1 ,\shskip   1 }   =  3   \bfF_{0  }  +  12     \bfF_{0 ,\shskip   1 } + 4 \bfF_{0 ,\shskip   2 }.
	\end{align*}
	Third, since $\nabla_{2 \mu + \frac 1 2 m} \lp \bfJ_{\mu,\shskip  m} \big(   z^2 /16 \pi^2 \big) \rp = 0$ (see \eqref{2eq: nabla J = 0}), we have
	\begin{align*}
	4 \bfF_{0 ,\shskip  2 } + 4 \bfF_{0 ,\shskip  1 } + 16 \pi^2 \bfF_{ 1 ,\shskip  0 } - \lp 2 \mu + \frac 1 2 m \rp^2 \bfF_{0 ,\shskip  0 } = 0.
	\end{align*}
	Finally, combining these, we have
	\begin{align*}
	& \hskip 13 pt u^2 \frac {\partial^2 \bfF_{0  }} {\partial u^2} + u \frac {\partial  \bfF_{0  }} {\partial u } + \lp \pi^2 u^2 - \lp \mu + \frac 1 4 m \rp^2 \rp \bfF_{0  } \\
	& = \lp - \pi^2 u^2 +  \pi i  u + \frac {3} {4  } \rp \bfF_{0  } + \lp  {4 \pi^2} - \frac {6 \pi i} {u } \rp \bfF_{1  } - \frac {4 \pi^2} {u^2} \bfF_{2 }\\
	& \hskip 10 pt - \lp   \pi i u + \frac 1 {2  } \rp \bfF_{0  } +  \frac {2 \pi i} {u } \bfF_{1  } + \lp \pi^2 u^2 -   \lp \mu + \frac 1 4 m \rp^2 \rp \bfF_{0  } \\
	& = \lp \frac 1 4 - \lp \mu + \frac 1 4 m \rp^2 \rp \bfF_{0  } + \lp   {4 \pi^2}  - \frac {4 \pi i} {u } \rp \bfF_{1  } - \frac {4 \pi^2} {u^2} \bfF_{2 }\\
	& = \lp \frac 1 4 - \lp \mu + \frac 1 4 m \rp^2 \rp \bfF_{0  }  + {4 \pi^2}   \bfF_{1  } -  \bfF_{0  } - 2 \bfF_{0 ,\shskip   1 }  + \frac 3 4 \bfF_{0  } +  3  \bfF_{0 ,\shskip  1 } +  \bfF_{0 ,\shskip  2 } \\
	& = - \lp \mu + \frac 1 4 m \rp^2   \bfF_{0  } + {4 \pi^2}   \bfF_{1  } +   \bfF_{0 ,\shskip  1 } +   \bfF_{0 ,\shskip  2 }\\
	& = 0,
	\end{align*} 
	which proves \eqref{5eq: differential equations, explicit}.
	
	\subsection{Conclusion}
	Combining \eqref{3eq: asymptotic F (y)} and \eqref{5eq: differential equations},  Lemma \ref{lem: Bessel equation} implies that 
	\begin{align}\label{5eq: final identity}
	\bfF_{ \mu,\shskip  m } \lp 4 u \rp = \bfJ_{\frac 1 2 \mu,\shskip  \frac 1 2 m } \lp u^2 \rp.
	\end{align}
	In view of the definition of $\bfF_{ \mu,\shskip  m } $  given by (\ref{5eq: G (y)}, \ref{5eq: F (y)}), this is equivalent to the identity  \eqref{eq: main 1} in  Theorem \ref{thm: main 1}. 

	\section{Proof of Corollary \ref{cor: main}}\label{sec: proof of Cor}
	
	In this last section, we outline a proof of Corollary \ref{cor: main}. 

	Some remarks on the   convergence of \eqref{eq: main 2} for $f \in \SS (\BC)$ are in order. In view of the asymptotics of $\bfJ_{\mu,\shskip  m} \lp z \rp$ at zero and infinity, the right hand side of \eqref{eq: main 2} absolutely converges for all $\mu$ and $m$, but the absolute convergence of left hand side only holds for   $|\Re \mu | < \frac 1 2$. 
	
	To prove \eqref{eq: main 2} for $f \in \SS (\BC)$ in Corollary \ref{cor: main}, it suffices to verify the identity,
	\begin{equation}\label{3eq: identity}
	\begin{split}
	\int_0^{2 \pi} \int_0^\infty \int_0^{2 \pi}  \int_0^\infty \bfJ_{\mu,\shskip  m} & \lp x e^{i\phi} \rp     e (- 2 x y \cos (\phi + \theta) ) f \lp y e^{i\theta} \rp     y  d y d \theta d x d \phi  \\
	& = \frac 1 4  \int_0^{2 \pi} \int_0^\infty
	e \lp \frac {\cos \theta } y \rp \bfJ_{\frac 1 2 \mu,\shskip  \frac 1 2 m} \lp  \frac 1 { 16 y^2 e^{2 i \theta}  } \rp f \lp y e^{i\theta} \rp  d y d \theta.
	\end{split}
	\end{equation}
	Note that \eqref{3eq: identity} is a direct consequence of the identity \eqref{eq: main 1} in Theorem \ref{thm: main 1} if one were able to change the order of integrations. However, the decay of $\bfJ_{\mu,\shskip  m} (z)$ at infinity is too slow to guarantee absolute convergence and the change of integration order. In order to get absolute convergence, our idea is to produce some decaying factor by partial integrations. 
	
	Since the idea  is straightforward, we shall only give a sketch of the proof as below and leave the details to the readers. Starting from the integral on the left hand side of \eqref{3eq: identity}, we write the inner integral in the Cartesian coordinates 
	and apply the combination of partial integrations that have the effect of dividing $ 4 \pi^2 x^2 + 1$. To be precise, letting $z = x e^{i\phi} $ and $u = y e^{i \theta}$, define the differential operator $\mathrm{D} = - (\partial / \partial u) (\partial / \partial \overline u) + 1$ so that $\mathrm{D} \lp e (- \Tr (z u)) \rp = (   4 \pi^2 z \overline z + 1 ) e (- \Tr (z u))$, then, by partial integrations,   the left side of \eqref{3eq: identity} is equal to
	\begin{align*}
	\int_0^{2 \pi} \int_0^\infty  \frac { \bfJ_{\mu,\shskip  m} \lp x e^{i\phi} \rp }    { 4 \pi^2 x^2 + 1} \lp \frac 1 2  \sideset{ }{_{\BC \smallsetminus \{0\}} }{\iint}      e \lp - \Tr \lp x e^{i \phi} u \rp \rp    \mathrm{D}     {f (u) }     {i d u  \nwedge  d \overline u}  \rp d x d \phi .
	\end{align*}
	After this, we change the order of integrations, which  is legitimate as $  \bfJ_{ \mu,\shskip  m } \lp x e^{i\phi} \rp / ( 4 \pi^2 x^2 + 1 ) $ is absolutely integrable. Then, we apply the partial integrations reverse to those that we performed at the beginning and rewrite the outer integral in the polar coordinates. In this way, we retrieve the expression on the  left hand side of \eqref{3eq: identity} but with changed integration order. Here, to prove that   differentiations under the integral sign are permissible, we need   the compact convergence of the double integral in \eqref{eq: main 1}  with respect to $u = y e^{i \theta}$, but this is verified in Proposition \ref{lem: compact convergence}.  
	Finally,  integrating out the resulting inner integral using \eqref{eq: main 1} in Theorem \ref{thm: main 1}, we arrive at the integral on the right hand side of  \eqref{3eq: identity}.

	When $\widehat{f} \in \SS (\BC \smallsetminus \{0\})$, the integral on the left hand side of \eqref{eq: main 2} becomes absolutely convergent for all $\mu$, then follows the second assertion in Corollary \ref{cor: main}.

	\appendix
	
	\section{Convergence of the Integral}\label{appendix: convergence}
	
	As alluded to in \S \ref{sec: asymptotic}, the integral in  \eqref{eq: main 1} would be absolutely convergent after integration by parts. In this appendix, we shall make it more precise and in the meanwhile prove that the convergence is compact  (uniform convergence in compact subset). This was used in the proof of Corollary \ref{cor: main} in \S \ref{sec: proof of Cor}.

	\begin{prop}\label{lem: compact convergence}
		Suppose that $|\Re \mu| < \frac 1 2$ and $m$ is even. 
		The integral
		\begin{align}\label{6eq: integral}
		\int_{0}^{2 \pi} \int_0^\infty \bfJ_{ \mu,\shskip  m } \hskip - 2 pt \lp x e^{i\phi} \rp  e (- 2 x y \cos (\phi + \theta) )    d x  d \phi  
		\end{align}
		is compactly convergent with respect to $y e^{i \theta}$.
	\end{prop}
	
	\begin{proof}
		Fix $Y > 1$. We need to verify that the integral \eqref{6eq: integral} converges uniformly on the annulus  $\left\{ y e^{i \theta} : y \in \left[1/ Y, Y\right] \right\}$. In order to use the arguments in \S \ref{sec: asymptotic}, let us change $y$ to $1/y$ and $\theta$ to $- \hskip 1pt \theta$. 
		
		First, we make the change of variables from $x e^{i \phi}$ to $y^2 x^2 e^{2 i \phi}$. As  $ y \in \left[1/ Y, Y\right]$, this does not affect the uniformity of convergence. Now we need to consider 
		\begin{align*}
	  4 y^2 \int_{0}^{\pi} \int_0^\infty   \bfJ_{ \mu,\shskip  m }    \big( y^2 x^2 e^{2i\phi} \big)  e \big( -   { 2 x^2 y  \cos (2\phi - \theta) }  \big)  x \shskip d x  d \phi
		\end{align*}
		 Second, we introduce a smooth partition of unity $ \lp 1 - v (x) \rp + v (x) \equiv 1$ on $(0, \infty) =  (0, 3   ] \cup  [ 2  , \infty )$ and split the integral accordingly into two parts.   In view of the estimate for $ \bfJ_{ \mu,\shskip  m } (z)$ in   \eqref{2eq: bounds for J} and the condition $|\Re \mu| < \frac 1 2$,   the first integral is obviously uniformly convergent for $y \in [1/Y, Y]$. As for the second integral, 
		 we insert the asymptotic formula of    $\bfJ_{ \mu,\shskip  m }  \lp y^2 x^2 e^{2 i\phi} \rp$ as in \eqref{2eq: asymptotic of J mu m}. Note here that $y \geqslant 1/ Y$ is bounded from below. The integral containing the error term is absolutely and uniformly convergent. We then need to consider the following integral obtained from the pair of leading terms,
		 \begin{align*}
		  2 y \int_0^{2 \pi} \int_{2}^{\infty}   v (x)   e ( y f (  x, \phi; \theta ) )   d x d \phi,
		 \end{align*}
		 and two other similar integrals of lower order; the phase function $ f \lp x, \phi; \theta \rp $ is defined in \S \ref{sec: asymptotic D}. Applying once the partial integration of H\"ormander  as in \S \ref{sec: Bound for E}, we arrive at 
		 \begin{align*}    
		 -  \frac {1}{\pi i} \int_0^{2 \pi} \int_{2}^{\infty} \hskip -2 pt \lp   \frac {\partial} {\partial x} \lp \frac {v (x) \partial_{x} f(x, \phi; \theta)} {x g (x, \phi; \theta)}  \rp \hskip -2 pt +  \frac {v (x)}{x^3} \frac {\partial} {\partial \phi} \lp \frac { \partial_{\phi} f(x, \phi; \theta)} { g (x, \phi; \theta)}  \hskip - 1 pt  \rp \hskip -1 pt \rp e ( y f ( x, \phi; \theta ) ) d x d \phi.
		 \end{align*}
		 By  the product rule for differentiations, the function in the large parenthesis is equal to
		 \begin{align*}
		 {v ' \partial_{x} f } / {x g } +
		 {v   \partial_{x}^2 f } / {x g } -
		 {v  \partial_{x} f } / {x^2 g } -
		 {v  \partial_{x} f \partial_{x} g} / {x g^2 } +
		 {v  \partial_{\phi}^2 f } / {x^3 g } -
		 {v  \partial_{\phi} f \partial_{\phi} g} / {x^3 g^2 }.
		 \end{align*}
		 Recall from \S \ref{sec: Bound for E} that for $ x \geqslant 2$ 
		 \begin{equation*}
		 g (x, \phi; \theta) \ggg x, 
		 \end{equation*}
		 \begin{align*}
		 &   \partial_x f (x, \phi; \theta) \lll x, \hskip 8 pt 
		 \partial_x^2  f (x, \phi; \theta) \lll 1, \hskip 8 pt 
		 \partial_\phi,  \partial_\phi^2 f  (x, \phi; \theta) \lll x^2, \hskip 8 pt
		 \partial_x ,  \partial_\phi g (x, \phi; \theta) \lll 1 .
		 \end{align*} 
		 It is easy to prove by these bounds that all the terms in the sum above are $O \big(1/x^2 \big)$ (the first term is actually compactly supported while the last term is indeed $O \big(1/x^3 \big) $). Then follows immediately the absolute and uniform convergence of the integral.
	\end{proof}

\begin{acknowledgement}
	This work was done during my stay at Rutgers University. I would like to acknowledge the Department of Mathematics for the hospitality and thank Stephen D. Miller and Henryk Iwaniec for their help.  I am especially grateful to Roman Holowinsky and Jim Cogdell for their comments  on this work and, more importantly, constant
	encouragements. I also thank the referee for constructive remarks and suggestions.
\end{acknowledgement}

	%
	
	\bibliographystyle{alphanum}
	\bibliography{references}
	

\end{document}